\newcommand{\R}{\mathbb{R}}
\newcommand{\N}{\mathbb{N}}
\renewcommand{\d}{\mathrm{d}}
\renewcommand{\i}{\mathfrak{i}}
\newcommand{\E}{\mathbf{E}}
\newcommand{\p}{\mathbf{P}}
\newcommand{\dint}{\int \hspace{-6pt} \int}
\theoremstyle{plain}
\newtheorem{lemma}{Lemma}
\newtheorem{theorem}{Theorem}
\newtheorem{proposition}{Proposition}
\newtheorem{corollary}{Corollary}
\title{Ergodic properties of generalized Ornstein--Uhlenbeck processes}
\begin{document}

\author{P\'eter Kevei\thanks{This research was funded by a postdoctoral fellowship
of the Alexander von Humboldt Foundation.} \\
Center for Mathematical Sciences, Technische Universit\"at M\"unchen \\
Boltzmannstra{\ss }e 3, 85748 Garching, Germany \\
\texttt{peter.kevei@tum.de}}

\maketitle

\begin{abstract}
We investigate ergodic properties of generalized Ornstein--Uhlenbeck processes. In 
particular, we provide sufficient conditions for ergodicity, and for  subexponential and  
exponential convergence to the invariant probability measure. We use the Foster--Lyapunov 
method. The drift conditions are obtained using the explicit form of the generator of the 
continuous process. In some special cases the optimality of our results can be shown.

\noindent \textit{Keywords:} Generalized Ornstein--Uhlenbeck processes; 
exponential/subexponential ergodicity; Foster--Lyapunov technique; petite set. \\
\noindent \textit{MSC2010:} 60J25; 60H10
\end{abstract}

\section{Introduction}

Let $(U_t, L_t)_{t \geq 0}$ be a bivariate L\'evy process with characteristic exponent
\begin{equation*} \label{eq:UVchf}
\begin{split}
& \log \E e^{\i (\theta_1 U_1 + \theta_2 L_1)} \\
&= 
\i (\theta_1 \gamma_U + \theta_2 \gamma_L) 
-\frac{1}{2} \langle \theta \Sigma,  \theta \rangle +
\dint_{\R^2} \left( e^{\i (\theta_1 z_1 + \theta_2 z_2)} - 1 -
\i (\theta_1 z_1 + \theta_2 z_2) I(|z| \leq 1) \right) \nu_{UL}(\d z) 
,
\end{split}
\end{equation*}
where $\gamma_U, \gamma_L \in \R$,
\[
\Sigma =
\begin{pmatrix}
\sigma_U^2 & \sigma_{UL} \\
\sigma_{UL} & \sigma_L^2
\end{pmatrix}
\in \R^{2 \times 2}
\]
is a nonnegative 
semidefinite matrix, $\nu_{UL}$ is a bivariate L\'evy measure.
Here and later on $\langle \cdot, \cdot \rangle$ stands for the usual inner product in 
$\R^2$, $| \cdot |$ is the Euclidean norm both in $\R$ and $\R^2$, and
$I(\cdot)$ stands for the indicator function.
To ease the notation we also write $\theta = (\theta_1, \theta_2)$, $z = (z_1, z_2)$. Let 
 $\nu_U, \nu_L$ denote the L\'evy measure of $U$ and $L$, respectively.
If $\dint_{|z| \leq 1} |z| \nu_{UL}(\d z) < \infty$ we write
\begin{equation*} \label{eq:UVchf2}
\E e^{\i (\theta_1 U_1 + \theta_2 L_1)} =
\exp \left\{ \i (\theta_1 \gamma'_U + \theta_2 \gamma'_L) 
-\frac{1}{2} \langle \theta \Sigma, \theta \rangle +
\dint_{\R^2} \left( e^{\i (\theta_1 z_1 + \theta_2 z_2)} - 1 \right)
\nu_{UL}(\d z)\right\},
\end{equation*}
where $(\gamma_U', \gamma_L')$ is the drift.

In the present paper we investigate ergodic properties of the 
unique solution of the stochastic differential equation
\begin{equation} \label{eq:SDEGOU}
\d V_t = V_{t-} \d U_t + \d L_t, \quad t \geq 0. 
\end{equation}
The  unique solution to (\ref{eq:SDEGOU}) was determined by Behme, Lindner, and Maller 
\cite[Proposition 3.2]{BLM}. Introduce the L\'evy process $\eta$ as
\begin{equation} \label{eq:def-eta}
\eta_t = L_t - \sum_{s\leq t} 
\frac{\Delta U_s \Delta L_s}{1 + \Delta U_s} - \sigma_{UL} t,
\end{equation}
where for any c\`adl\`ag process $Y$ its jump at $t$ is $\Delta Y_t = Y_t - Y_{t-}$.
If $\nu_U(\{ -1 \} ) = 0$ then the solution to (\ref{eq:SDEGOU}) can be written as
\begin{equation*} \label{eq:defV}
V_t = \mathcal{E}(U)_t \left[ V_ 0 + \int_0^t \mathcal{E}(U)_{s-}^{-1} \d \eta_s  
\right], 
\end{equation*}
where the stochastic exponential (Dol\'eans--Dade exponential) $\mathcal{E}(U)$ is
\[
\mathcal{E}(U)_t =  e^{U_t - \sigma_U^2 t/2} \prod_{s \leq t} (1 + \Delta U_s)
e^{-\Delta U_s}.
\]
While for $\nu_U(\{ -1 \} ) > 0$
\begin{equation} \label{eq:defV2}
\begin{split}
V_t & = \mathcal{E}(U)_t \left[ V_0 +  \int_0^t \mathcal{E}(U)_{s-}^{-1} \d \eta_s 
\right]
I(K(t) = 0) \\
& \phantom{=} \ + \mathcal{E}(U)_{(T(t),t]} \left[ \Delta L_{T(t)} +  \int_{T(t)}^t 
\mathcal{E}(U)_{(T(t),s)}^{-1} \d \eta_s \right] I(K(t) \geq 1), 
\end{split}
\end{equation}
where $K(t) = \# \{ s \in (0,t]:\, \Delta U(s) = -1 \} $, 
$T(t) = \sup \{ s \in (0,t]: \, \Delta U(s) = -1 \}$, and for $s \leq t$
\[
\begin{split}
\mathcal{E}(U)_{(s,t]} & =  e^{U_t - U_s - \sigma_U^2 (t-s)/2} 
\prod_{s< u \leq t} (1 + \Delta U_u) e^{-\Delta U_u}, \\
\mathcal{E}(U)_{(s,t)} & =  e^{U_{t-} - U_s - \sigma_U^2 (t-s)/2} 
\prod_{s< u < t} (1 + \Delta U_u) e^{-\Delta U_u}.
\end{split}
\]
Here we always consider deterministic initial value $V_0 = x_0 \in \R$, in particular it 
is independent of $(U,L)$. The processes $U$ and $L$ are semimartingales with respect to 
the smallest filtration, which satisfies the usual hypotheses and contains the filtration 
generated by $(U,L)$. Stochastic integrals are always meant with respect to this 
filtration. Since we do not directly use stochastic analysis, we prefer to suppress 
unnecessary notation.

If $\nu_U((-\infty, -1])=0$ then  we may introduce the L\'evy process $\xi$ as
\begin{equation} \label{eq:def-xi}
\xi_t = - \log \mathcal{E}(U)_t =
- U_t + \frac{\sigma_U^2}{2} t + \sum_{s \leq t} [ \Delta U_s - \log (1 + \Delta U_s) ].
\end{equation}
Then $(\xi, \eta)$ is a bivariate L\'evy process.
The generalized Ornstein--Uhlenbeck process (GOU) corresponding to $(\xi, \eta)$ is
\begin{equation} \label{eq:GOU}
V_t = e^{-\xi_t} \left[ V_0 + \int_0^t e^{\xi_{s-}} \d \eta_s  \right],
\end{equation}
where $V_0$ is independent of $(\xi, \eta)$. 
In fact there is a one-to-one correspondence between bivariate 
L\'evy processes $(\xi, \eta)$ and $(U, L)$, where $\nu_U((-\infty, -1]) = 0$.
Thus, without the restriction $\nu_U((-\infty, -1]) = 0$ the class of solutions to 
(\ref{eq:SDEGOU}) is a larger than the GOU processes.
In the present paper we deal with the general case, and we use the description through 
$(U, L)$. When $U_t = - \mu t$, $\mu > 0$, $V$ is called L\'evy-driven 
Ornstein--Uhlenbeck process, and if $L_t$ is a Brownian motion, then we obtain the 
classical Ornstein--Uhlenbeck process.

\medskip

Stationary GOU processes, or more generally stationary solutions to (\ref{eq:SDEGOU}) have
long attracted much attention in the probability community. De Haan and Karandikar 
\cite{dHK} showed that these processes are the natural continuous time analogues of 
perpetuities. Carmona, Petit, and Yor \cite{CPY} gave sufficient conditions in order that 
$V$ in (\ref{eq:GOU}) converges in distribution to the stationary distribution for any 
nonstochastic $V_0 = x_0$. Necessary and sufficient conditions for the existence of a 
stationary solution were given by Lindner and Maller \cite{LM} in the GOU case, and 
by Behme, Lindner, and Maller \cite{BLM} in case of solutions to (\ref{eq:SDEGOU}). 
Tail behavior and moments of the stationary solution was investigated by Behme \cite{B11}.
The stationary solution under appropriate conditions is 
$\int_0^\infty e^{-\xi_{s-}} \d L_s$, which is the exponential functional of the bivariate 
L\'evy process $(\xi, L)$. Continuity properties of these exponential functionals were 
investigated by Carmona, Petit, and Yor \cite{CPY2}, Bertoin, Lindner, and Maller 
\cite{BLM08}, Lindner and Sato \cite{LS}, and Kuznetsov, Pardo, and Savov \cite{KPS}.
Wiener--Hopf factorization of exponential functionals of L\'evy processes (when $L_t 
\equiv t$) was treated by Pardo, Patie, and Savov \cite{PPS}. GOU processes have a wide 
range of applications, among others in mathematical physics, in finance, and in risk 
theory. For a more complete account on GOU processes and on exponential functionals of 
L\'evy processes  we refer to the survey paper by Bertoin and Yor \cite{BY}, to Behme 
and Lindner \cite{BL15}, and to \cite{KPS}, and the references therein.

Here we deal with ergodic properties of GOU processes. Ergodicity of stochastic processes 
is important on its own right, and also in applications, such as estimation of certain 
parameters. Ergodic theory for general Markov process, both in the discrete and in the 
continuous case was developed  by Meyn and Tweedie \cite{MT1, MT2, MT3}. Using the 
so-called Foster--Lyapunov techniques, they worked out conditions for ergodicity and 
exponential ergodicity in terms of the generator of the underlying process. Recently, 
much attention is drawn to situations where the rate of convergence is only 
subexponential. Fort and Roberts \cite{FR}, Douc, Fort, and Guillin \cite{DFG} and 
Bakry, Cattiaux,  and Guillin \cite{BCG} proved general conditions for subexponential 
rates. See also the lecture notes by Hairer \cite{H}.

Concerning OU processes, Sato and Yamazato \cite{SY} gave necessary and sufficient 
conditions for the convergence of a L\'evy-driven  OU process. Exponential ergodicity  
was investigated by Masuda \cite{M} and Wang \cite{W} in the L\'evy-driven case, and by 
Fasen \cite{F} and Lee \cite{L} for GOU processes. Parameter estimation for GOU processes 
was treated by Belomestny and Panov \cite{BP}.

\smallskip

The paper is organized as follows. 
Section \ref{sect:main} contains the main results of the paper. After fixing the basic 
notation, in Theorem \ref{thm:genCD2} under general integrability assumptions we prove 
ergodicity for $V$.  In particular, 
the assumptions in Theorem \ref{thm:genCD2} reduce to the necessary and sufficient 
condition by Sato and Yamazato \cite{SY} in the L\'evy-drive OU case. In Theorems 
\ref{thm:gensubexp} and \ref{thm:gensubexp2} we obtain two different subexponential 
rates: a polynomial and an `almost exponential' one. Moreover, we point out in 
Proposition \ref{prop:CDsubexp-gen} in Section \ref{sect:proofs} that under more complex 
moment assumptions more general subexponential rates can be obtained. These results are 
particularly interesting in view of the rare subexponential convergence rates. In 
fact, in his Remark 4.4 \cite{M} Masuda claimed that in most cases stationary 
L\'evy-driven OU processes are exponentially ergodic. However, we also mention 
that subexponential rates were found in some special cases in \cite{FR, DFG}. 
Theorem \ref{thm:genCD3} provides sufficient conditions for exponential ergodicity. 
Exponential ergodicity was proved by Lee \cite{L} under stronger conditions.
In Theorems \ref{thm:genCD2}--\ref{thm:genCD3} we assume that $\nu_U(\{ -1\}) =0$.
It is apparent from (\ref{eq:defV2}) that the process behaves very differently if 
$\nu_U(\{ -1 \} ) > 0$. In the latter case the process 
restarts itself in finite exponential times from 0, therefore it cannot go to infinity 
regardless of the moment properties of the L\'evy measure. 
Indeed, as a consequence of a general result by Avrachenkov, Piunovskiy, and Zhang 
\cite{APZ} we show in Theorem \ref{thm:-1} below that in this case the process is always 
exponentially ergodic. Thus, concerning ergodic properties the case 
$\nu_U(\{ -1 \} ) = 0$ is more interesting, and we largely concentrate on it.
At the end of Section \ref{sect:main} we compare our results to earlier ones, and also 
spell out some statements in special cases.

Section \ref{sect:Fost-Lya} contains the description of the Foster--Lyapunov technique.
Using the explicit form of the generator of the process we give here the drift 
conditions corresponding to Theorems \ref{thm:genCD2}--\ref{thm:genCD3}. The 
infinitesimal generator of the process $V$ is determined in \cite{P, BL15}. The 
difficulty in our case is to show that domain of the extended generator contains unbounded 
norm-like functions; this is done in Proposition \ref{prop:extgen} in Section 
\ref{sect:proofs}. The proof of the drift conditions relies on Lemma \ref{lemma:integral}, 
which states that a two-dimensional integral with respect to the L\'evy measure $\nu_{UL}$ 
asymptotically equals to a one-dimensional integral with respect to the L\'evy measure 
$\nu_U$. This is the reason why the drift conditions in the theorems depend mainly on the 
univariate measure $\nu_U$. However, note that the integrability condition does depend on 
$\nu_{UL}$.
Finally, we investigate the petite sets. In Theorems \ref{thm:genCD2}--\ref{thm:genCD3} 
we assume that all compact sets are petite sets for some skeleton chain. In Proposition 
\ref{prop:petite-2} and in the remarks afterwards we give a sufficient condition for this 
assumption. It turns out that under natural conditions the petiteness assumption is 
satisfied.

The proofs are gathered in Section \ref{sect:proofs}. First we show that the domain of 
the extended generator is large enough. Then we deal with the drift conditions. After the 
short proof of the petiteness condition, the proofs of the main theorems are consequences 
of the drift conditions and general results in \cite{MT3, DFG}.

\section{Main results and discussion} \label{sect:main}

\subsection{Results}

We use the methods developed by Meyn and Tweedie \cite{MT1, MT2, MT3}, and we
also use their terminology. 
First, we recall some basic notions about Markov processes, which we need later.

As usual for a Markov process
$(X_t)_{t \geq 0}$ for any $x \in \R$, $\p_x$ and $\E_x$ stands for the probability and 
expectation conditioned on $X_0 = x$.
A time-homogeneous Markov process $(X_t)_{t \geq 0}$ on $\R$ is 
\textit{$\phi$-irreducible} (or simply irreducible), if for some $\sigma$-finite measure 
$\phi$ on $(\R, \mathcal{B}(\R))$, $\mathcal{B}(\R)$ being the Borel sets,
$\phi(B) > 0$ implies $\int_0^\infty \p_x \{ X_t \in B \} \d t > 0$, for all $x \in \R$.
The notion of \textit{petite sets} plays a crucial role in proving recurrence properties. 
A nonempty set $C \in \mathcal{B}(\R)$ is petite set (respect to the process $X$), if 
there is a probability distribution $a$ on $(0,\infty)$, and a nontrivial measure 
$\phi$, such that
\begin{equation*} \label{eq:def-petite}
\int_0^\infty \p_x\{ X_t \in A \} a(\d t) \geq \phi(A), \ \forall x \in C. 
\end{equation*}
The process $(X_t)_{t \geq 0}$ is called \textit{Feller process}, if 
$T_t f(x) := \E_x f(X_t) \in C_0$ for any $f \in C_0$, $t \geq 0$, 
and $\lim_{t \downarrow 0} T_t f(x) = f(x)$, for any $f \in C_0$, where 
$C_0 = \{ f: \, f \text{ is continuous}, \lim_{|x| \to \infty} f(x) = 0 \}$. If $T_t f$, 
$t > 0$, is only continuous, but does not necessarily tend to 0 at infinity, then the 
process is a \textit{weak Feller process}.

For a continuous time Markov process $(X_t)_{t \geq 0}$ the discretely sampled process 
$(X_{n\delta})_{n \in \N}$, $\delta > 0$, which is a Markov chain, called 
\textit{skeleton chain}. Irreducibility and petiteness are defined analogously for Markov 
chains. A Markov chain $(X_n)_{n \in \N}$ is \textit{weak Feller chain} if 
$T f(x) = \E_x f(X_1)$ is continuous and bounded for any continuous and bounded $f$.

For a measurable function $g \geq 1$ and a signed measure $\mu$ introduce the notation
\[
\| \mu \|_g = \sup \left\{ \int h \d \mu: |h| \leq g \right\}.
\]
When $g \equiv 1$ we obtain the total variation norm, which is simply denoted by
$\| \cdot \|$.

\smallskip

Before stating the main results we define the finite measure $\nu'$ on $[-1,1]$ by
\begin{equation} \label{eq:defnu'}
\nu'(A) = \nu_{UL}( (A \times \R) \cap \{ |z| > 1 \}), 
\end{equation}
where  $A \subset [-1,1]$ is Borel measurable.

In the theorems below we need that all compact sets are petite sets for 
some skeleton chain. This assumption is satisfied under mild conditions. Sufficient 
conditions are stated in Subsection \ref{subsect:petite}.

First we give a sufficient condition for the ergodicity of the process.

\begin{theorem} \label{thm:genCD2}
Assume that $\nu_U(\{ -1\} ) = 0$, 
all compact sets are petite for some skeleton chain,
\begin{equation} \label{eq:int-cond-CD2}
\dint_{|z| \geq 1} \log |z| \, \nu_{UL} (\d z) < \infty, \quad 
\int_{-3/2}^{-1/2} |\log |1 + z|  | \, \nu_U(\d z) < \infty,
\end{equation}
and
\begin{equation} \label{eq:ineqCD2}
\gamma_U - \frac{\sigma_U^2}{2}
+ \int_{\R} \left[  \log |1 + z|  - z I(|z| \leq 1) \right] \nu_U (\d z) + 
\int_{-1}^1 z \nu'(\d z) < 0.
\end{equation}
Then $V$ is ergodic, that is there is an invariant probability measure $\pi$, 
such that for any $x \in \R$
\[
\lim_{t \to \infty} \| \p_x \{ V_t \in \cdot \} - \pi\| =0. 
\]
\end{theorem}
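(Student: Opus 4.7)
The plan is to invoke the Foster--Lyapunov ergodicity criterion of Meyn and Tweedie \cite{MT3}: once all compact sets are petite for some skeleton chain, it suffices to produce a norm-like function $W \colon \R \to [1, \infty)$ in the domain of the extended generator $\mathcal{A}$ of $V$, together with a constant $c > 0$ and a compact set $C$, such that $\mathcal{A} W(x) \leq -c$ for every $x \notin C$. Existence of a unique invariant probability measure $\pi$ and total variation convergence $\| \p_x\{V_t \in \cdot\} - \pi \| \to 0$ then follow from general theory.

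Motivated by the appearance of $\log|1+z|$ in (\ref{eq:ineqCD2}), the natural choice is $W(x) = 1 + \log(1 + |x|)$, suitably smoothed near the origin so that $W \in C^2(\R)$. The first step is to verify that this unbounded, logarithmically growing $W$ lies in the domain of the extended generator; this is the content of the auxiliary Proposition \ref{prop:extgen} announced in Section \ref{sect:proofs}. The second step is to compute $\mathcal{A} W(x)$ from the explicit generator of the SDE (\ref{eq:SDEGOU}). Since $W'(x) = \sgn(x)/(1+|x|)$ and $x^2 W''(x) = -x^2/(1+|x|)^2 \to -1$ as $|x| \to \infty$, the continuous drift and diffusion contributions converge to $\gamma_U - \sigma_U^2/2$. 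For the jump contribution one uses that
\[
W(x + x z_1 + z_2) - W(x) = \log \frac{1 + |x + xz_1 + z_2|}{1 + |x|} \longrightarrow \log|1 + z_1|
\]
as $|x| \to \infty$ whenever $z_1 \neq -1$; integrability of this limit against $\nu_U$ is exactly what is demanded by (\ref{eq:int-cond-CD2}). The decisive step is then Lemma \ref{lemma:integral}, which converts the bivariate integral against $\nu_{UL}$ into a univariate integral against $\nu_U$ plus the boundary term $\int_{-1}^1 z\, \nu'(\d z)$ arising from large jumps of $L$ occurring together with small jumps of $U$. Dominated convergence then gives
\[
\lim_{|x| \to \infty} \mathcal{A} W(x) = \gamma_U - \frac{\sigma_U^2}{2} + \int_{\R} \bigl[\log|1+z| - z I(|z| \leq 1)\bigr] \nu_U(\d z) + \int_{-1}^1 z\, \nu'(\d z),
\]
which equals the left-hand side of (\ref{eq:ineqCD2}) and is strictly negative by hypothesis. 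Hence the drift inequality holds outside a sufficiently large compact (hence petite) set, and the Meyn--Tweedie criterion delivers the claim.

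The two main obstacles I anticipate are (i) justifying that the unbounded $W$ lies in the domain of the extended generator and (ii) the asymptotic reduction of the bivariate jump integral to the univariate quantities in (\ref{eq:ineqCD2}). Both are precisely the technical issues around which Section \ref{sect:proofs} is organized, via Proposition \ref{prop:extgen} and Lemma \ref{lemma:integral}, so the core of the argument reduces to assembling those ingredients and checking that the integrability assumption (\ref{eq:int-cond-CD2})---which controls simultaneously the large-jump ($\log|z|$ at infinity) and the small-$|1+z|$ singularity---suffices to justify dominated convergence in each piece of $\mathcal{A} W$.
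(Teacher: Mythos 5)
Your proposal is correct and follows essentially the same route as the paper: a logarithmic Foster--Lyapunov function (the paper uses $f(x)=\log|x|$ for $|x|\geq e$ with a smooth extension, in place of your $1+\log(1+|x|)$), membership in the domain of the extended generator via Proposition \ref{prop:extgen}, reduction of the bivariate jump integral to the univariate one via Lemma \ref{lemma:integral}, the separate treatment of the singularity near $z=-1$ using the second condition in (\ref{eq:int-cond-CD2}), and finally the Meyn--Tweedie ergodicity theorem (Theorem 5.1 of \cite{MT3}) applied to the drift inequality (\ref{eq:CD2}) for the truncated generators $\mathcal{A}_n$.
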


In the L\'evy-driven OU case our assumptions reduce to the the necessary and sufficient 
condition for convergence to an invariant measure, given by Sato and Yamazato \cite{SY} 
(in any dimension); see Corollary \ref{cor:CD2} below.

\smallskip

Assuming stronger moment assumptions we obtain polynomial rate of convergence. However, 
note that the drift condition is the same as in the previous result.

\begin{theorem} \label{thm:gensubexp}
Assume that  $\nu_U(\{ -1\} ) = 0$, 
all compact sets are petite for some skeleton chain,
\begin{equation} \label{eq:int-cond-subexp}
\dint_{|z| \geq 1} (\log |z|)^\alpha \nu_{UL} (\d z) < \infty, 
\text{ for some } \alpha > 1, \quad  
\int_{-3/2}^{-1/2} |\log |1 + z| | \, \nu_U(\d z) < \infty,
\end{equation}
and (\ref{eq:ineqCD2}) holds. Then there is an invariant probability measure $\pi$, 
such that for some $C> 0$ for any $x \in \R$
\[
\| \p_x \{ V_t \in \cdot \} - \pi \| \leq  C \, (\log |x|)^\alpha \, t^{1-\alpha} .
\]
\end{theorem}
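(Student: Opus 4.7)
The plan is to apply the Foster--Lyapunov technique for subexponential convergence due to Douc, Fort, and Guillin \cite{DFG}, via Proposition \ref{prop:CDsubexp-gen}, to the extended generator $\mathcal{A}$ of $V$. One seeks a norm-like function $W \geq 1$ in the domain of $\mathcal{A}$, a concave increasing $\phi$ with $\phi(v)/v \to 0$, and a petite set $C$ such that
\begin{equation*}
\mathcal{A} W(x) \leq -\phi(W(x)) + b \, I(x \in C).
\end{equation*}
To obtain the rate $(\log|x|)^\alpha \, t^{1-\alpha}$, the natural choice is $W(x) = (\log(|x| \vee e))^\alpha$ and $\phi(v) = c_0 \, v^{1 - 1/\alpha}$, since the DFG machinery then yields $\|\p_x\{V_t \in \cdot\} - \pi\| \leq W(x)/r(t)$ with $r(t) \asymp t^{\alpha - 1}$.

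The heart of the proof is verifying this drift inequality. Using the explicit form of the generator (given in \cite{P, BL15}; that $W$ lies in its extended domain is established in Proposition \ref{prop:extgen}),
\begin{equation*}
\begin{split}
\mathcal{A} W(x) & = (\gamma_U x + \gamma_L) W'(x) + \tfrac{1}{2}(\sigma_U^2 x^2 + 2\sigma_{UL} x + \sigma_L^2) W''(x) \\
& \quad + \dint_{\R^2} \bigl[ W(x + x z_1 + z_2) - W(x) - (x z_1 + z_2) W'(x) I(|z| \leq 1) \bigr] \nu_{UL}(\d z).
\end{split}
\end{equation*}
For large $|x|$ one has $W'(x) \sim \alpha(\log|x|)^{\alpha-1}/x$ and, whenever $z_1 \neq -1$, $W(x + x z_1 + z_2) - W(x) \sim \alpha(\log|x|)^{\alpha-1}\log|1+z_1|$, with corrections that come with an extra factor of $1/\log|x|$ or involve $\log|z_2|$. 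Invoking Lemma \ref{lemma:integral} to replace the bivariate integral against $\nu_{UL}$ by a univariate one against $\nu_U$ (plus the boundary contribution captured by $\nu'$), one arrives at the leading-order asymptotics
\begin{equation*}
\mathcal{A} W(x) \sim \alpha (\log|x|)^{\alpha-1}\left[ \gamma_U - \frac{\sigma_U^2}{2} + \int_\R (\log|1+z| - z I(|z|\leq 1)) \, \nu_U(\d z) + \int_{-1}^1 z \, \nu'(\d z) \right].
\end{equation*}
The bracket is the left-hand side of \eqref{eq:ineqCD2}, hence equals some $-c < 0$. Therefore $\mathcal{A} W(x) \leq -\tfrac{\alpha c}{2} W(x)^{1 - 1/\alpha}$ outside a sufficiently large ball, which gives the required drift with $C$ taken to be a (compact, hence petite) ball.

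The main obstacle will be the uniform asymptotic analysis of the jump integral, in particular controlling the large-$z_2$ tail: this is precisely where the strengthened moment $\int_{|z|\geq 1}(\log|z|)^\alpha \, \nu_{UL}(\d z) < \infty$ is essential, while the second condition in \eqref{eq:int-cond-subexp} absorbs the logarithmic singularity of $\log|1+z_1|$ near $z_1 = -1$ and $\nu_U(\{-1\}) = 0$ ensures the atom at $z_1 = -1$ can be ignored. Once these estimates are in place, the proof follows the scheme of Theorem \ref{thm:genCD2} with an extra power of $\log$ throughout, and the subexponential convergence rate is extracted from the general result of \cite{DFG} (equivalently, from Proposition \ref{prop:CDsubexp-gen}).
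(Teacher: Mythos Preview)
Your proposal is correct and follows essentially the same route as the paper: the Lyapunov function $W(x)=(\log|x|)^\alpha$, the concave $\phi(v)=v^{1-1/\alpha}$, the reduction of the bivariate jump integral to a $\nu_U$-integral via Lemma~\ref{lemma:integral}, and the final appeal to the DFG subexponential machinery are exactly the ingredients the paper uses (packaged as Proposition~\ref{prop:genCDsubexp} and then Theorems~3.2/3.4 of \cite{DFG}). One small caveat: Proposition~\ref{prop:CDsubexp-gen} as stated requires the stronger condition $\int_{-3/2}^{-1/2}|1+z|^{-\varepsilon}\nu_U(\d z)<\infty$ near $z=-1$, whereas Theorem~\ref{thm:gensubexp} assumes only $\int_{-3/2}^{-1/2}|\log|1+z||\,\nu_U(\d z)<\infty$; the paper handles this by a direct estimate specific to $f(x)=(\log|x|)^\alpha$ (see the proof of Proposition~\ref{prop:genCDsubexp}), so you should not invoke Proposition~\ref{prop:CDsubexp-gen} verbatim but rather imitate its proof with the sharper bound $|f(xu)-f(x)|/(xf'(x))\leq 2|\log u|$ for $u\in[|x|^{-\delta},1]$.
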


We can show not only polynomial but more general convergence rates. However, in the 
general case the assumptions are more complicated. We spell out one more example.
A general result on the drift condition is given in Proposition \ref{prop:CDsubexp-gen}, 
from its proof it will be clear, why the drift condition is the same in Theorems 
\ref{thm:genCD2}, \ref{thm:gensubexp}, and \ref{thm:gensubexp2}.

\begin{theorem} \label{thm:gensubexp2}
Assume that $\nu_U(\{ -1\} ) = 0$, 
all compact sets are petite for some skeleton chain,
\begin{equation} \label{eq:int-cond-subexp2}
\begin{gathered}
\dint_{|z| \geq 1} \exp \{ \gamma (\log |z|)^\alpha \} \nu_{UL} (\d z) < \infty, 
\text{ for some } \alpha \in (0,1), \gamma > 0, \\ 
\int_{-3/2}^{-1/2} | \log | 1 + z|  |\, \nu_U(\d z) < \infty,
\end{gathered}
\end{equation}
and (\ref{eq:ineqCD2}) holds.
Then there is an invariant probability measure $\pi$, 
such that for some $C> 0$ for any $x \in \R$
\[
\| \p_x \{ V_t \in \cdot \} - \pi \| \leq  
C \exp\{\gamma (\log |x|)^\alpha \} \,  e^{-(t/\alpha)^\alpha} t^{1-\alpha}.
\]
\end{theorem}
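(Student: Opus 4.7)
The plan is to apply the Foster--Lyapunov drift approach of Douc--Fort--Guillin~\cite{DFG} in the subexponential regime, with a Lyapunov function tailored to the moment condition (\ref{eq:int-cond-subexp2}). Specifically, I would take
\[
W(x) = \exp\bigl\{ \gamma \, (\log(e + |x|))^\alpha \bigr\},
\]
and aim for a drift inequality of the form $\mathcal{A} W(x) \leq -\phi(W(x)) + b\, I_C(x)$ with $\phi(w) = c\, w\, (\log w)^{1-1/\alpha}$ for some $c > 0$ and $C$ a compact (hence petite, by assumption) set. The choice of $\phi$ is forced by the heuristic $x W'(x) \asymp \alpha \gamma (\log|x|)^{\alpha-1} W(x) \asymp W(x)(\log W(x))^{1-1/\alpha}$, and it is exactly the concavity profile for which the rate prescribed by the theorem emerges.

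The first technical step is to check $W$ lies in the domain of the extended generator of $V$, which is where Proposition~\ref{prop:extgen} is invoked; $W$ is norm-like and smooth off $0$, so the required growth control on $W$ and $W'$ along the dynamics reduces to an integrability check against $\nu_{UL}$, and this is precisely what (\ref{eq:int-cond-subexp2}) furnishes. Next, I would compute $\mathcal{A}W$ using the explicit form of the generator recalled in Section~\ref{sect:Fost-Lya}. The drift and Gaussian parts contribute a term whose leading coefficient against $xW'(x)$ is $\gamma_U - \sigma_U^2/2$. The jump part is split into the regions $|z|\leq 1$ and $|z|>1$: on the small-jump region a second-order Taylor expansion of $W$ around $x$ produces a $\log|1+z_1|-z_1$ contribution, while on the large-jump region one uses that for $|x|$ large and fixed $z$,
\[
\frac{W(x(1+z_1)+z_2)}{W(x)} \leq \exp\bigl\{ \gamma\,(\log|z|)^\alpha \bigr\} \cdot \bigl(1 + o(1)\bigr),
\]
which is integrable against $\nu_{UL}$ precisely by (\ref{eq:int-cond-subexp2}). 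Lemma~\ref{lemma:integral} is the key input that lets the bivariate integral against $\nu_{UL}$ be replaced, to leading order, by the univariate integral against $\nu_U$ appearing in (\ref{eq:ineqCD2}), so that the leading-order coefficient of $\alpha\gamma(\log|x|)^{\alpha-1} W(x)$ in $\mathcal{A}W(x)$ is exactly the left-hand side of (\ref{eq:ineqCD2}).

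Denoting this coefficient by $-\kappa$ with $\kappa > 0$, one then obtains, outside a sufficiently large compact set,
\[
\mathcal{A}W(x) \leq -\kappa\, \alpha \gamma (\log|x|)^{\alpha-1} W(x) + O\bigl(W(x)(\log|x|)^{\alpha-2}\bigr),
\]
which translates into the subexponential drift inequality with the above $\phi$. The claimed rate then follows from the continuous-time subexponential ergodicity results in \cite{DFG, MT3}: with $H_\phi(w) = \int_1^w \phi(y)^{-1}\,\d y \asymp (\log w)^{1/\alpha}/(c\alpha)$, one has $H_\phi^{-1}(t) \asymp \exp\{(t/\alpha)^\alpha\}$, and the refinement of the DFG bound yields the extra $t^{1-\alpha}$ prefactor from the $\phi\circ H_\phi^{-1}(t)$ term; the factor $\exp\{\gamma(\log|x|)^\alpha\}$ in the final bound is just $W(x)$.

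The main obstacle I expect is the bookkeeping in the drift computation: extracting the expression in (\ref{eq:ineqCD2}) as the exact leading coefficient and verifying that all lower-order error terms (especially the contribution of large jumps in the bivariate $\nu_{UL}$-integral, where $W$ is applied to an argument of the form $x(1+z_1)+z_2$ rather than just $x(1+z_1)$) are absorbed into the $O(\cdot)$ remainder. Since the drift condition is structurally identical to the one for Theorems \ref{thm:genCD2}--\ref{thm:gensubexp} (only the Lyapunov function and the associated $\phi$ change), the analysis should parallel that of Proposition~\ref{prop:CDsubexp-gen} with $W$ above in place of the polynomial/logarithmic Lyapunov function used there.
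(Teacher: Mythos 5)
Your proposal is correct and follows essentially the same route as the paper: the same Lyapunov function $f(x)=\exp\{\gamma(\log|x|)^\alpha\}$ with $\varphi(x)=x(\log x)^{1-1/\alpha}$, membership in the domain of the extended generator via Proposition~\ref{prop:extgen}, reduction of the bivariate jump integral to the univariate one via Lemma~\ref{lemma:integral} so that the leading coefficient of $xf'(x)$ is the left-hand side of (\ref{eq:ineqCD2}), and the rate extracted from $H_\varphi^{\leftarrow}$ as in \cite{DFG}. The one piece you flag only as ``bookkeeping'' --- dominating the large-jump and near-$(-1)$ contributions using only the assumed integrability (\ref{eq:int-cond-subexp2}) rather than the stronger conditions of Proposition~\ref{prop:CDsubexp-gen} --- is precisely the content of the paper's Proposition~\ref{prop:genCDsubexp2}, and your sketch correctly identifies it as the point needing care.
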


As in  Theorem 3.2 in \cite{DFG}, under the same assumptions as in Theorems 
\ref{thm:gensubexp} and \ref{thm:gensubexp2} above it is possible to prove convergence 
rates in other norms, 
i.e.~for $\| \p_x \{ V_t \in \cdot \} - \pi \|_g$ with specific $g$. There is a trade-off 
between the convergence rate and the norm function $g$: larger $g$ corresponds to weaker 
rate, and vice versa. See Theorem 3.2 \cite{DFG} and the remark after it.

Last we deal with exponential ergodicity.

\begin{theorem} \label{thm:genCD3}
Assume that $\nu_U(\{ -1\} ) = 0$, 
all compact sets are petite for some skeleton chain,
\begin{equation} \label{eq:int-condCD3}
\dint_{|z| \geq 1} |z|^\beta \nu_{UL} (\d z) < \infty,
\text{ for some } \beta \in (0,1],
\end{equation}
and
\begin{equation} \label{eq:ineqCD3}
\gamma_U - \frac{\sigma_U^2 (1-\beta)}{2}
+ \int_{\R}  \frac{|1 + z|^\beta - 1 - z \beta I(|z| \leq 1)}{\beta} \nu_U (\d z) + 
\int_{-1}^1 z \nu'(\d z) < 0.
\end{equation}
Then $V$ is exponentially ergodic, that is there is an 
invariant probability measure $\pi$, such that for some $\rho <1, C > 0$,
\[
\| \p_x \{ V_t \in \cdot \} - \pi\|_g \leq C (1 + |x|^\beta) \rho^t, 
\]
with $g(x) = 1 + |x|^\beta$.
\end{theorem}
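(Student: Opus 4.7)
The plan is to verify the classical Foster--Lyapunov drift criterion for exponential ergodicity (Theorem 6.1 of \cite{MT3}) with the norm-like test function $f(x) = 1 + |x|^\beta$, and then invoke the petiteness hypothesis to convert the drift inequality into $g$-geometric ergodicity with $g = f$. The first step is to check that $f$ lies in the domain of the extended generator $\mathcal{A}$ of $V$, which is not automatic since $f$ is unbounded; this is exactly the content of Proposition \ref{prop:extgen}, and uses the moment assumption (\ref{eq:int-condCD3}), $\dint_{|z|\geq 1}|z|^\beta\, \nu_{UL}(\d z) < \infty$, to guarantee that the jump part of the generator is well defined on $f$.

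The central computation is to show $\mathcal{A} f(x) \leq -c\, f(x) + b\, I_K(x)$ for some compact (hence petite) set $K$ and constants $b, c > 0$. Using the explicit form of the generator of $V$ from \cite{P, BL15}, for $|x|$ large,
\[
\mathcal{A} f(x) = (\gamma_U x + \gamma_L) f'(x) + \tfrac{1}{2}(\sigma_U^2 x^2 + 2\sigma_{UL} x + \sigma_L^2) f''(x) + \dint_{\R^2} \bigl[ f(x + x z_1 + z_2) - f(x) - (x z_1 + z_2) f'(x) I(|z| \leq 1) \bigr] \nu_{UL}(\d z).
\]
The drift and diffusion terms give $\beta\bigl(\gamma_U - \sigma_U^2(1-\beta)/2\bigr)|x|^\beta + o(|x|^\beta)$. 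For the jump integral, a Taylor expansion identifies the integrand asymptotically with $|x|^\beta\bigl[|1+z_1|^\beta - 1 - \beta z_1 I(|z|\leq 1)\bigr]$, and I would then invoke Lemma \ref{lemma:integral} to replace this two-dimensional $\nu_{UL}$-integral by a one-dimensional $\nu_U$-integral; the mismatch between the truncation $I(|z|\leq 1)$ in $\R^2$ and the one-dimensional analogue produces precisely the correction $\beta |x|^\beta \int_{-1}^1 z\, \nu'(\d z)$, explaining the $\nu'$-term in (\ref{eq:ineqCD3}). Dividing by $f(x)$ and letting $|x| \to \infty$ yields
\[
\limsup_{|x|\to\infty} \frac{\mathcal{A}f(x)}{f(x)} \leq \beta \cdot \bigl[\text{left-hand side of (\ref{eq:ineqCD3})}\bigr] < 0,
\]
so $\mathcal{A} f \leq -c f$ outside some interval $K = [-M, M]$. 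Since $\mathcal{A} f$ is bounded above on the compact set $K$, the full drift inequality $\mathcal{A} f(x) \leq -c f(x) + b\, I_K(x)$ follows.

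With the drift inequality and petiteness of $K$ in hand, $\phi$-irreducibility and aperiodicity of the relevant skeleton chain follow by standard Meyn--Tweedie arguments, and Theorem 6.1 of \cite{MT3} then delivers the asserted bound $\| \p_x \{ V_t \in \cdot \} - \pi \|_g \leq C(1 + |x|^\beta)\rho^t$ with $g = f$. The main obstacle I expect is the asymptotic analysis of the jump integral: producing the $\int_{-1}^1 z\, \nu'(\d z)$ correction with exactly the right sign and coefficient requires careful matching of the two-dimensional and one-dimensional truncations through Lemma \ref{lemma:integral}, and dominating the residual $z_2$-dependent remainders uniformly in $|x|$, which is where the integrability assumption (\ref{eq:int-condCD3}) enters essentially.
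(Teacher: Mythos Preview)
Your proposal is correct and follows essentially the same route as the paper: verify via Proposition \ref{prop:extgen} that the test function lies in the domain of the extended generator, use Lemma \ref{lemma:integral} to reduce the bivariate jump integral to a $\nu_U$-integral plus the $\nu'$-correction, read off the asymptotics of $\mathcal{A} f(x)/f(x)$ to obtain the exponential drift condition (\ref{eq:CD3}), and conclude by Theorem 6.1 of \cite{MT3}. The only minor adjustment is that your test function $f(x) = 1 + |x|^\beta$ fails to be $C^2$ at the origin for $\beta \leq 1$, so (as in the paper's proof of Proposition \ref{prop:genCD3}) you should replace it by an even $C^2$ function equal to $|x|^\beta$ for $|x| \geq 1$; this does not affect any of the asymptotic computations.
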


Note that in the drift conditions (\ref{eq:ineqCD2}), (\ref{eq:ineqCD3})
only the last term depends on the joint measure $\nu_{UL}$, everything else is 
determined only by the law of $U$. However, the integral conditions do depend on the 
joint law of $(U,L)$.

\smallskip

Finally, consider the significantly different case $\nu_U(\{ -1\}) > 0$.
In this case the process returns to 0 in exponential times and the process restarts.
It is natural to expect that exponential ergodicity holds without further 
moment conditions. This is exactly the situation treated by Avrachenkov, Piunovskiy and 
Zhang \cite{APZ}. Put $\lambda = \nu_U (\{ -1\}) > 0$, and let $\widetilde V$ be 
the process with the same characteristics as $V$, except 
$\nu_{\widetilde U} (\{-1\}) = 0$.
Then the process $V$ can be seen as the process $\widetilde V$ which restarts from 0 
after independent exponential random times with parameter $\lambda$.
In Corollary 2.1 \cite{APZ} it is shown that 
\[
\pi (A ) = \int_0^\infty \p_0 \{ \widetilde V_t \in A \} \lambda e^{-\lambda s } \d s
\]
is the unique invariant probability measure for $V$. 
We summarize Theorem 2.2 in \cite{APZ} as follows.

\begin{theorem} \label{thm:-1}
Assume that $\lambda = \nu_U(\{-1\}) > 0$. Then the process $V$ is exponentially ergodic, 
that is
\[
\| \p_x \{ V_t \in \cdot \} - \pi \| \leq 2 e^{-\lambda t},
\]
where the invariant measure $\pi$ is defined above.
\end{theorem}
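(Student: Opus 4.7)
The theorem is stated as a translation of Theorem 2.2 of \cite{APZ}, so my plan is to verify that the process $V$ fits into their regenerative framework and then quote their bound. The first step is to identify the regeneration mechanism from (\ref{eq:defV2}): at any jump time $s$ of $U$ with $\Delta U_s = -1$, the multiplicative factor $(1 + \Delta U_s)$ vanishes and $V_s = \Delta L_s$, erasing all information about $V_{s-}$. Since $\lambda = \nu_U(\{-1\}) > 0$, the L\'evy--It\^o decomposition shows that these $-1$ jumps arrive as a Poisson point process of intensity $\lambda$, independent of the remainder of $(U,L)$. Between two consecutive $-1$-jumps the pair drives $V$ in exactly the same way as it drives $\widetilde V$, so $V$ behaves like $\widetilde V$ restarted (from a value determined by $\Delta L$ at that epoch) at each ring of an independent $\mathrm{Exp}(\lambda)$ clock.

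Once this regenerative picture is set up, the ergodicity bound follows from a short coupling argument. Let $V^{(1)}$ start from $x$ and $V^{(2)}$ from the candidate invariant law $\pi$, both driven by the same realisation of $(U,L)$. Let $\tau$ denote the first time at which $U$ has a jump of size $-1$; then $\tau \sim \mathrm{Exp}(\lambda)$ and $V^{(1)}_\tau = V^{(2)}_\tau = \Delta L_\tau$, so the two processes coincide for all $t \geq \tau$. The coupling inequality, combined with the paper's convention $\|\mu\| = \sup_{|h| \leq 1} \int h\, \d\mu$, yields
\[
\| \p_x\{V_t \in \cdot\} - \pi \| \leq 2\, \p\{\tau > t\} = 2 e^{-\lambda t},
\]
which is the advertised bound.

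The only non-trivial ingredient is to check that $\pi$ as given in the displayed formula preceding the theorem is actually an invariant probability measure for $V$; this is precisely Corollary 2.1 of \cite{APZ} once the regenerative structure is in place. The main ``obstacle'' is therefore purely bookkeeping: matching our notation to the hypotheses of \cite{APZ}, and checking the independence of the $-1$-jump clock from everything else, both of which are routine consequences of the L\'evy property of $(U,L)$.
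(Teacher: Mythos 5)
Your argument is correct and follows essentially the same route as the paper: the paper's entire proof consists of recording the restart-at-rate-$\lambda$ regenerative structure and quoting Theorem 2.2 and Corollary 2.1 of \cite{APZ}, while you additionally make the underlying coupling explicit (two copies driven by the same $(U,L)$ coalesce at the first $-1$-jump of $U$, an $\mathrm{Exp}(\lambda)$ time), which is precisely how the bound $2e^{-\lambda t}$ arises under the paper's convention for $\|\cdot\|$. Your remark that the restart value is $\Delta L_\tau$ rather than $0$ is if anything more careful than the paper's phrasing and is harmless for the bound, since both coupled copies restart at the same value.
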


Note that in this case a strictly stationary causal solutions always exists, see 
\cite[Theorem 2.2]{BL15}.

\smallskip

Fasen \cite{F} investigated ergodic and mixing properties of GOU processes in the special 
case when $\eta$ is subordinator, and the stationary solution has a Pareto-like tail. 
Assuming that $V$ is simultaneously $\phi$-irreducible, and under some further moment 
assumptions Proposition 3.4 \cite {F} states that $V$ is exponentially  $\beta$-mixing, 
and exponentially ergodic. Lee \cite{L} proved exponential ergodicity and $\beta$-mixing 
for more general GOU processes. In Theorem 2.1 \cite{L} it is shown that the 
distribution of $(V_{nh})_{n \in \N}$ converges to a probability measure $\pi$, which is 
the unique invariant distribution for the process, if $0 < \E \xi_h \leq \E |\xi_h| < 
\infty$ and $\E \log^+ |\eta_h| < \infty$. Recall the definitions of $\eta$ and $\xi$ 
from (\ref{eq:def-eta}), (\ref{eq:def-xi}). The condition $\E |\xi_h| < \infty$ is much 
stronger than our condition in Theorem \ref{thm:genCD2}. However, when $U_t$ is 
continuous, one sees easily that $\E \xi_h = - h(\gamma_U - \sigma_U^2/2)$, and so 
conditions $\E \xi_h > 0$, $\E \log^+ |\eta_h|  < \infty$ are the same our
conditions in Theorem \ref{thm:genCD2}. Moreover, Lee showed in her Theorem 2.2 
that the exponential ergodicity holds for the $h$-skeleton process whenever for some $r > 
0$
\[
\E e^{-r \xi_h} < \infty, \text{ and }
\E \left| e^{-\xi_h} \int_0^h e^{\xi_{s-}} \d \eta_s \right|^{r} < \infty,
\]
and her Theorem 2.6 states that this implies exponential ergodicity for $V$ if the 
transition density functions exist, and they are uniformly bounded on compact sets. By 
Proposition 3.1 in \cite{B11} for $r \geq 1$ condition $\E e^{-r \xi_h} < \infty$ holds 
if and only if $\E |U_1|^r < \infty$. For the other condition note that by Proposition 
2.3 in \cite{LM}
\[
e^{-\xi_t} \int_0^t e^{\xi_{s-}} \d \eta_s \stackrel{\mathcal{D}}{=}
\int_0^t e^{-\xi_{s-}} \d L_s.
\]
Combining \cite[Proposition 4.1]{LM} (or rather its proof) and \cite[Proposition 
3.1]{B11} we have that the latter has finite $r$th moment, $r \geq 1$, if
$\E |U_1|^{\max \{1, r\} p } < \infty$, $\E  \mathcal{E}(U)_1^r < 1$, and 
$\E |\eta_1|^{\max \{1, r\} q } < \infty$, for some $p, q > 1$, $p^{-1} + q^{-1} =1$.

\subsection{Special cases}

In order to compare with existing results we spell out Theorem \ref{thm:genCD2} and 
\ref{thm:genCD3} in the L\'evy-driven case, when $U_t = - \mu t$. 

For ergodicity, we have the following

\begin{corollary} \label{cor:CD2}
Assume that all compact sets are petite sets for some skeleton chain, and the integral 
condition $\int_{|z| \geq 1} \log |z| \nu_L(\d z) < \infty$ holds. Then
$V$ is ergodic, that is there is an invariant probability measure $\pi$, 
such that for any $x \in \R$
\[
\lim_{t \to \infty} \| \p_x \{ V_t \in \cdot \} - \pi\| =0. 
\]
\end{corollary}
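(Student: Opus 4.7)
The plan is simply to verify that Corollary \ref{cor:CD2} is the specialisation of Theorem \ref{thm:genCD2} to the case $U_t = -\mu t$, so I would just read off the characteristics of $U$ and check each hypothesis.

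In the L\'evy-driven OU case $U$ is pure drift, so $\gamma_U = -\mu$, $\sigma_U^2 = 0$, $\nu_U \equiv 0$, and the covariance $\sigma_{UL}$ between $U$ and the Brownian part of $L$ is also zero. Moreover, since $\Delta U_s \equiv 0$, the bivariate L\'evy measure $\nu_{UL}$ is concentrated on $\{0\} \times \R$, where it agrees with $\nu_L$. Using this, the two integrability conditions in (\ref{eq:int-cond-CD2}) reduce to the single assumption of the corollary: the second integral over $[-3/2,-1/2]$ vanishes because $\nu_U \equiv 0$, while the first becomes $\int_{|z_2|\ge 1} \log|z_2|\,\nu_L(\d z_2) < \infty$ because on the support of $\nu_{UL}$ we have $|z|=|z_2|$.

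For the drift condition (\ref{eq:ineqCD2}), the middle integral over $\R$ vanishes since $\nu_U \equiv 0$, and $\gamma_U - \sigma_U^2/2 = -\mu$. For the final term, the measure $\nu'$ from (\ref{eq:defnu'}) is supported at $z=0$ (again because $\nu_{UL}$ lives on $\{z_1=0\}\times\R$), so $\int_{-1}^{1} z\,\nu'(\d z) = 0$. The drift condition thus collapses to $-\mu < 0$, which holds by the standing assumption $\mu > 0$ in the L\'evy-driven OU case. Combined with the petiteness hypothesis, Theorem \ref{thm:genCD2} then delivers the claimed ergodicity, so there is no real obstacle beyond this bookkeeping.
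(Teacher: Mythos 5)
Your proposal is correct and is exactly the intended argument: the paper states Corollary \ref{cor:CD2} as the specialisation of Theorem \ref{thm:genCD2} to $U_t=-\mu t$ without further proof, and your bookkeeping (identifying $\gamma_U=-\mu$, $\sigma_U^2=\sigma_{UL}=0$, $\nu_U\equiv 0$, $\nu_{UL}$ concentrated on $\{0\}\times\R$, hence $\nu'$ a point mass at $0$ and the drift condition reducing to $-\mu<0$) is precisely what is needed.
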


In the L\'evy-driven OU case the necessary and sufficient condition for convergence 
to an invariant measure was given by Sato and Yamazato \cite{SY} (in any 
dimension). They showed that $V_t$ converges in distribution if and only if 
$\int_{|z| \geq 1} \log |z| \nu_L(\d z) < \infty$, which is exactly our assumption.
Otherwise $|V_t|$ tends to infinity in probability. This suggests that the conditions in 
Theorem \ref{thm:genCD2} are optimal.

The exponential ergodicity reads as

\begin{corollary} \label{cor:LOU-CD3}
Assume that $\nu_U(\{ -1\} ) = 0$, all compact sets are petite sets for some skeleton 
chain, and $\int_{\R \backslash [-1,1]} |x|^{\beta} \nu_L(\d x) < \infty$
for some $\beta \in (0,1]$. Then $V$ is exponentially ergodic, 
that is there is an invariant probability measure $\pi$, such that for some 
$\rho<1, C > 0$
\begin{equation} \label{eq:LOU-exp}
\| \p_x \{ V_t \in \cdot \} - \pi\|_g \leq C (1 + |x|^\beta) \rho^t, 
\end{equation}
with $g(x) = 1 + |x|^\beta$.
\end{corollary}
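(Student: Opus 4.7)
The plan is to derive the corollary as a direct application of Theorem \ref{thm:genCD3}, by reducing all its hypotheses to those stated here. In the L\'evy-driven OU case $U_t = -\mu t$, the component $U$ is deterministic, so its characteristic triplet is $(\gamma_U,\sigma_U^2,\nu_U)=(-\mu,0,0)$, and the bivariate L\'evy measure $\nu_{UL}$ is supported on $\{0\}\times \R$ and naturally identified with $\nu_L$ through the projection onto the second coordinate.

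With this identification, the integrability hypothesis (\ref{eq:int-condCD3}) becomes $\int_{|z_2|\geq 1}|z_2|^\beta\,\nu_L(\d z_2)<\infty$, because on $\{0\}\times \R$ the Euclidean norm $|z|$ coincides with $|z_2|$; this is exactly the assumption in the corollary. Next I would simplify the drift inequality (\ref{eq:ineqCD3}) term by term: the linear contribution is $\gamma_U = -\mu$, the Gaussian correction $-\sigma_U^2(1-\beta)/2$ vanishes, and the integral against $\nu_U$ vanishes since $\nu_U\equiv 0$. Unwinding (\ref{eq:defnu'}) for $\nu_{UL}$ supported on $\{0\}\times \R$ shows that $\nu'$ is a point mass at $0$ with total mass $\nu_L(\{|z|>1\})$, so $\int_{-1}^{1} z\,\nu'(\d z)=0$ as well. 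Hence (\ref{eq:ineqCD3}) collapses to the strict inequality $-\mu<0$, which holds since $\mu>0$ by the very definition of the L\'evy-driven OU process.

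Finally, the assumption $\nu_U(\{-1\})=0$ in Theorem \ref{thm:genCD3} is automatic because $\nu_U\equiv 0$, while the petiteness hypothesis is assumed in both statements. Theorem \ref{thm:genCD3} therefore applies and yields (\ref{eq:LOU-exp}) with $g(x)=1+|x|^\beta$. The only real subtlety is the identification of $\nu'$ as a point mass at $0$, which requires carefully unwinding (\ref{eq:defnu'}) together with the concentration of $\nu_{UL}$ on $\{0\}\times\R$; beyond this bookkeeping the corollary is a direct specialization.
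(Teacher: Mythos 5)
Your proposal is correct and is exactly the argument the paper intends: the corollary is stated as a direct specialization of Theorem \ref{thm:genCD3} to $U_t=-\mu t$ (so $\gamma_U=-\mu$, $\sigma_U^2=0$, $\nu_U\equiv 0$, $\nu_{UL}$ concentrated on $\{0\}\times\R$), and your verification that (\ref{eq:int-condCD3}) reduces to the stated moment condition on $\nu_L$ and that (\ref{eq:ineqCD3}) collapses to $-\mu<0$ (including the observation that $\nu'$ is a point mass at $0$, so $\int_{-1}^1 z\,\nu'(\d z)=0$) is precisely the bookkeeping the paper leaves implicit.
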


For $d$-dimensional L\'evy-driven OU processes Masuda \cite{M} and Wang \cite{W} 
proved exponential ergodicity. We spell out their results in one dimension. Let
$U_t = - \mu t$, $\mu > 0$. 
Using Foster--Lyapunov techniques, in Theorem 4.3 \cite{M} Masuda proved that if
$\int_\R |x|^\beta \pi(\d x) < \infty$, $\beta > 0$, where $\pi$ is the stationary 
distribution, then $V_t$ is exponential $\beta$-mixing, i.e.~for some $\rho < 1, C>0$
\[
\int_\R \| \p_x \{ V_t \in \cdot \} - \pi \| \pi(\d x) \leq C \rho^t.
\]
Using coupling methods, Wang \cite[Theorem 1]{W} showed that (\ref{eq:LOU-exp}) holds 
with $\beta = 1$, i.e.~the process is exponentially ergodic, if
$\int_{\R \backslash [-1,1]} |x| \nu_L(\d x) < \infty$, and 
the L\'evy measure satisfies the smoothness condition
\[
\limsup_{r \downarrow 0} 
r^{-1} \sup_{|y| \leq r} \| \nu_L^\varepsilon - (\delta_y * \nu_L^\varepsilon) \| < 
\infty,
\]
where $\nu_L^\varepsilon(B) = \nu_L(B \backslash \{z : |z| < \varepsilon \} )$ if
$\nu_L(\R) = \infty$, otherwise $\nu_L^\varepsilon = \nu_L$,
$*$ stands for convolution, and $\delta_y$ is the Dirac-measure at $y$.
In Theorem 2  \cite{W} it was proved that (\ref{eq:LOU-exp}) holds if
$\int_{\R \backslash [-1,1]} |x|^{\beta} \nu_L(\d x) < \infty$, $\beta \in (0,1]$, and 
the L\'evy measure satisfies
\[
\liminf_{|x| \to \infty} 
\frac{x^2 \int_{|z| \leq 1/|x|} z^2 \nu_L(\d z) }{\log (1 + |x|)} > 0.
\]
The latter condition implies $\nu_L(\R) = \infty$, and it is satisfied for stable 
processes. Thus our exponential ergodicity 
results are new even in the L\'evy driven case.

Subexponential rates are rare in the literature. For compound Poisson driven
Ornstein--Uhlenbeck processes with nonnegative step size Fort and Roberts 
\cite[Lemma 18]{FR} proved polynomial rate of convergence, while in the same setup 
(under stronger moment conditions) Douc, Fort, and Guillin \cite[Proposition 5.7]{DFG} 
showed more general subexponential convergence rates. Theorems \ref{thm:gensubexp} and 
\ref{thm:gensubexp2} are generalizations of their results.

\smallskip

Our results are optimal in the following sense.
Fort and Roberts gave examples for a compound Poisson-driven OU-process, which fails to 
be exponentially ergodic, or even is not positive recurrent \cite[Example 3.3]{FR}.
Assume that $V$ is a L\'evy-driven OU process such that $L_t = S_{N_t}$, 
where $N_t$ is a standard Poisson process, and $S_n = X_1 + \ldots + X_n$, where
$X, X_1, \ldots$ are i.i.d.~nonnegative random variables. In \cite[Lemma 17]{FR} it was 
shown that if $\E X^\alpha = \infty$ for any $\alpha > 0$, then the process is not 
exponentially ergodic. Moreover, if $\E \log X = \infty$ then the process is not positive 
recurrent.

\medskip

Finally, we spell out some of the results in the continuous case, i.e.~when
the process $V$ is a diffusion, which is much easier to handle. More importantly, in this 
case we are able to show some negative results, e.g.~when $V$ is nonrecurrent.
We use the results in Khasminskii \cite{K}.

Let $(U, L)$ be a bivariate Brownian motion with generating triplet
$(\gamma_Z, \Sigma_Z, 0)$. Let $A=(a_{i,j})_{i,j=1,2}$ be a matrix, such that
$A A^\top = \Sigma_Z$, and let $W, \widetilde W$ be independent standard
Brownian motions. Then $(U, L)$ can be written as
\[
\begin{split}
U_t = & \gamma_U t + a_{11} W_t + a_{12} \widetilde W_t, \\
L_t = & \gamma_L t + a_{21} W_t + a_{22} \widetilde W_t.
\end{split}
\]
The SDE defining $V$ reads as
\begin{equation*} \label{eq:SDE-cont}
\begin{split}
\d V_t & = V_t \d U_t + \d L_t \\ 
& = (\gamma_U V_t + \gamma_L) \d t + (a_{11} V_t + a_{21}) \d W_t
+ ( a_{12} V_t + a_{22} ) \d \widetilde W_t.
\end{split}
\end{equation*}
From standard results we see that there exists a unique solution on $(0,\infty)$.
By Theorem 3.1 in \cite{BL15} the infinitesimal
generator of the process $V$ is
\[
\mathcal{A} f(x) = (\gamma_U x + \gamma_L) f'(x) +
\frac{1}{2} ( x^2 \sigma_U^2 + 2 x \sigma_{UL} + \sigma_L^2) f''(x). 
\]
The main difference compared to the general case is that this operator is a local 
operator, therefore the domain of the extended generator automatically contains all $C^2$ 
functions. (See (\ref{eq:def-A}) and Proposition \ref{prop:extgen}.)

We use the terminology of Khasminskii \cite{K}.
A process is \emph{recurrent relative to the domain $U$} if it is regular
(defined on $(0,\infty)$) and for every $ x \in U^c$ one has
$\p_x \{ \tau_U < \infty \} =1$, where $\tau_U$ is the first entrance time into $U$. It 
is \emph{positive recurrent} if $\E_x \tau_U < \infty$. If a homogeneous process is 
recurrent relative to some bounded open domain, then it is recurrent to any bounded open 
domain, see \cite[Lemma 4.1, p.~101]{K}. 

The first part of the next statement follows from our 
general results. On the other hand, in this special case we have the converse.

\begin{proposition} \label{prop:trans}
Whenever $\gamma_U < \sigma_U^2/2$ the process $V$ is positive recurrent. While if
$\gamma_U > \sigma_U^2/2$ then $V$ is nonrecurrent relative to any bounded open domain 
$U$.
\end{proposition}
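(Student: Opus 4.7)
The positive-recurrence direction follows directly from Theorem \ref{thm:genCD2}. Since $(U,L)$ is a bivariate Brownian motion, both $\nu_U$ and $\nu_{UL}$ vanish identically, so the integrability conditions (\ref{eq:int-cond-CD2}) are vacuous, all L\'evy-measure integrals in (\ref{eq:ineqCD2}) as well as the measure $\nu'$ defined in (\ref{eq:defnu'}) vanish, and the drift inequality reduces to $\gamma_U - \sigma_U^2/2 < 0$, which is the standing hypothesis. Under the mild nondegeneracy of the diffusion coefficient $\sigma^2(x) = x^2 \sigma_U^2 + 2 x \sigma_{UL} + \sigma_L^2$ the transition kernel admits a smooth positive density, so every compact set is petite for any $\delta$-skeleton. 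Theorem \ref{thm:genCD2} therefore yields convergence to an invariant probability measure, which is equivalent to positive recurrence in Khasminskii's sense.

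For the nonrecurrence direction, the plan is to build a bounded $C^2$ function that is superharmonic outside a compact set, then apply optional stopping. Fix $\gamma \in (0, 2\gamma_U/\sigma_U^2 - 1)$ (this interval is nonempty precisely because $\gamma_U > \sigma_U^2/2$) and set $f(x) = |x|^{-\gamma}$ for $|x| \geq 1$, smoothly extended to a bounded nonnegative $C^2$ function on all of $\R$. A direct calculation with the explicit generator gives, for $|x|$ large,
\begin{equation*}
\mathcal{A} f(x) = \gamma |x|^{-\gamma} \left[\frac{(\gamma + 1) \sigma_U^2}{2} - \gamma_U\right] + o(|x|^{-\gamma}),
\end{equation*}
whose leading coefficient is strictly negative by the choice of $\gamma$. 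Hence $\mathcal{A} f \leq 0$ outside some compact set $K$ which we take to contain $\overline{U}$. By Dynkin's formula $f(V_{t \wedge \tau_K})$ is then a bounded supermartingale for any starting point $x_0 \notin K$, so letting $t \to \infty$ yields $\E_{x_0} f(V_{\tau_K}) I(\tau_K < \infty) \leq f(x_0)$. Since $\inf_{\partial K} f > 0$ while $f(x_0) \to 0$ as $|x_0| \to \infty$, it follows that $\p_{x_0}\{\tau_K < \infty\} < 1$ for $|x_0|$ large. As $\tau_U \geq \tau_K$ starting outside $K$, this proves nonrecurrence relative to $U$; by \cite[Lemma 4.1]{K} the choice of the bounded open domain is immaterial.

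The main care point is the possible degeneracy of $\sigma^2$: being the quadratic form of a PSD matrix evaluated on $(x,1)$, it may vanish at a single point exactly when $\sigma_{UL}^2 = \sigma_U^2 \sigma_L^2$. One must check that this neither invalidates the petiteness assumption in the first part nor the application of It\^o/Dynkin in the second. Both issues are purely local, and the asymptotic behavior of $\mathcal{A}f$ at infinity, which drives everything, is determined solely by $\sigma_U^2$ and $\gamma_U$; enlarging $K$ absorbs any region near the zero of $\sigma^2$ or near the smoothing of $f$ where $\mathcal{A}f$ could fail to be nonpositive. A secondary point is that, had we wanted a sharp boundary case $\gamma_U = \sigma_U^2/2$, neither the test function $\log(1+x^2)$ used implicitly via Theorem \ref{thm:genCD2} nor $|x|^{-\gamma}$ would give a strict inequality, and a finer logarithmic analysis would be required; this is outside the stated dichotomy.
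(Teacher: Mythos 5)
Your proof is correct and follows essentially the same route as the paper: the positive-recurrence part is delegated to the general ergodicity results, and the nonrecurrence part uses the test function $|x|^{-\gamma}$ with $\gamma_U>(\gamma+1)\sigma_U^2/2$, which is exactly the paper's choice $f(x)=k-|x|^{-\alpha}$ up to an affine transformation (superharmonic versus subharmonic), with the same generator computation. The only cosmetic difference is that you carry out the bounded-supermartingale/optional-stopping argument by hand where the paper simply invokes Lemma 3.10 of Khasminskii, and both proofs conclude via his Lemma 4.1 to pass to arbitrary bounded domains.
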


%
%
%
%
%

\section{The Foster--Lyapunov method} \label{sect:Fost-Lya}

In this section we describe the Foster--Lyapunov method.

\subsection{Drift conditions}

We again start with some notation.
The \textit{infinitesimal generator} $\mathcal{A}$ of a the Markov process $X$ is defined 
as 
\[
\mathcal{A}f(x) = \lim_{t \downarrow 0} t^{-1} \E_x [ f(X_t) - f(x) ]
\]
whenever it 
exists. Its domain is denoted by $\mathcal{DI}(X)$.
The \textit{extended generator} $\mathcal{A}$ of the Markov process $X$ is defined as 
$\mathcal{A}f =g$ whenever $f(X_t) - \int_0^t g(X_s) \d s$ is a local martingale with 
respect to the natural filtration. Its domain is denoted by $\mathcal{DE}(X)$. The same 
notation should not cause confusion, since the two operators are the same, only the 
domains are different.

In order to apply Foster--Lyapunov techniques we have to truncate the process 
$V$. For $n \in \N$ let
\begin{equation} \label{eq:V-trunc}
V_t^n = V_{t \wedge T^n}
\end{equation}
where $T^n = \inf \{t \geq 0 : |V_t| \geq n  \}$. Note that this is not exactly the 
process defined in \cite[p.521]{MT3}, but the results in \cite{MT3} are valid for our 
process; see the comment after formula (2) in \cite[p.521]{MT3}. We also emphasize that 
the stopped process is not necessarily bounded.

Let us define the operator $\mathcal{A}$ as 
\begin{equation} \label{eq:def-A}
\begin{split}
\mathcal{A} f(x) & =  
(x \gamma_U + \gamma_L) f'(x) + \frac{1}{2} ( x^2 \sigma_U^2 + 
2 x \sigma_{UL} + \sigma_L^2) f''(x) \\
& \phantom{=} \, + \dint_{\R^2} \left[
f(x + x z_1 + z_2) - f(x) - f'(x) (x z_1 + z_2) I(|z| \leq 1) \right] \nu_{UL}(\d z),
\end{split}
\end{equation}
where $f \in C^2$ (the set of twice continuously differentiable functions)
is such that the integral in the definition exists.
In Exercise V.7 in Protter  \cite{P} and in Theorem 3.1 in Behme and Lindner 
\cite{BL15} it is shown that the infinitesimal generator of $V$ is $\mathcal{A}$, and
$C^\infty_c \subset \mathcal{DI}(V)$, where
$C^\infty_c$ is the set of infinitely many times differentiable compactly supported 
functions. Moreover, if $\nu_U(\{ -1 \} ) = 0$ then $V$ is a Feller process, and
$
\mathcal{DI}(V) \supset 
\{ f \in C_0^2 : \lim_{|x| \to \infty} ( |x f'(x)| + x^2 |f''(x)|) = 0 \},
$
which is a core, see \cite[Theorem 3.1]{BL15}. Here
$C_0^2= \{ f: \, f \text{ twice continuously differentiable, and }
\lim_{|x| \to \infty} f''(x) = 0 \}$. It is clear from the regenerative property of the 
process in (\ref{eq:defV2}) that if $\nu_U(\{ -1\} ) > 0$ then it is \emph{not} 
Feller process, only weak Feller.
A slightly different form of the generator in terms of $(\xi, \eta)$, for independent 
$\xi$ and $\eta$ is given in \cite[Proposition 2.3]{KPS}, see also \cite[Remark 
3.4]{BL15}.

Let us define the generator of $V^n$ as
\[
\mathcal{A}_n f(x) = 
\begin{cases}
\mathcal{A} f(x), & |x| < n, \\
0, & |x| \geq n.
\end{cases}
\]
In Proposition \ref{prop:extgen} we show that $\mathcal{A}_n$ is indeed the extended 
generator of the process $V^n$, and
\[
\mathcal{DE}(V^n) \supset 
\left\{ f \in C^2 : \, \dint_{|z| > 1} |f(|z|)| \nu_{UL}(\d z) < \infty \right\}.
\]
From this result it also follows that 
\[
\mathcal{DE}(V) \supset 
\left\{ f \in C^2 : \, \dint_{|z| > 1} |f(|z|)| \nu_{UL}(\d z) < \infty \right\}.
\]

Following \cite{MT3}
we introduce the various ergodicity conditions for the generator $\mathcal{A}_n$.
A function $f: \R \to [0,\infty)$ is \textit{norm-like} if $f(x) \to \infty$ as
$|x| \to \infty$. In the following conditions below $f$ is always a norm-like 
function.
The  recurrence condition is
\begin{equation} \label{eq:CD1}
\exists f, \, \exists d > 0, \, C \text{\ compact, such that }
\mathcal{A}_n f(x) \leq d I_C(x), \ |x| < n,  n \in \N.
\end{equation}
The ergodicity condition is
\begin{equation} \label{eq:CD2}
\begin{gathered}
\exists f, \, \exists c, d > 0, \, g \geq 1 \text{ measurable}, \,
C \text{ compact, such that}\\
\mathcal{A}_n f(x) \leq -c g(x) + d I_C(x), \ |x| < n,  n \in \N.
\end{gathered}
\end{equation}
The exponential ergodicity condition is
\begin{equation} \label{eq:CD3}
\exists f, \, \exists c, d > 0  \text{, such that } 
\mathcal{A}_n f(x) \leq -c f(x) + d,  \ |x| < n,  n \in \N.
\end{equation}
For subexponential rates of convergence we use 
more recent results due to Douc, Fort and Guillin \cite{DFG},
Bakry, Cattiaux and Guillin \cite{BCG}. For a survey see also
Hairer's notes \cite{H}. The 
subexponential ergodicity condition 
(\cite[Theorems 3.4 and 3.2]{DFG}) is
\begin{equation} \label{eq:CDsubexp}
\exists f \geq 1, d > 0, \, C \text{ compact},
\varphi  \text{ positive concave, such that } 
\mathcal{A} f(x) \leq - \varphi(f(x)) + d I_C(x). 
\end{equation}

In the following we state the drift conditions corresponding to
Theorems \ref{thm:genCD2}, \ref{thm:gensubexp},  \ref{thm:gensubexp2}, and 
\ref{thm:genCD3}, respectively.

\begin{proposition} \label{prop:genCD2}
Assume the integrability condition (\ref{eq:int-cond-CD2}),
and the drift condition (\ref{eq:ineqCD2}).
Then (\ref{eq:CD2}) holds with $f(x) = \log |x|$, $|x| \geq e$, and $g\equiv 1$.
\end{proposition}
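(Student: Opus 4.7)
The plan is to take $f$ to be a non-negative norm-like $C^2$ function on $\R$ that coincides with $\log|x|$ for $|x|\geq e$. First, I verify $f\in\mathcal{DE}(V^n)$ via Proposition \ref{prop:extgen}, which requires $\dint_{|z|>1}|f(|z|)|\,\nu_{UL}(\d z)<\infty$; since $f(|z|)=\log|z|$ for $|z|\geq e$, this reduces to the first assumption in (\ref{eq:int-cond-CD2}).

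Next I compute $\mathcal{A}f(x)$ for $|x|>e$ using $f'(x)=1/x$ and $f''(x)=-1/x^2$. The diffusion-and-drift part is
\[
\gamma_U-\frac{\sigma_U^2}{2}+\frac{\gamma_L-\sigma_{UL}}{x}-\frac{\sigma_L^2}{2x^2},
\]
which tends to $\gamma_U-\sigma_U^2/2$ as $|x|\to\infty$. The jump part has integrand
\[
\log\!\Bigl|1+z_1+\frac{z_2}{x}\Bigr|-\Bigl(z_1+\frac{z_2}{x}\Bigr)I(|z|\leq 1),
\]
and the crux is to pass to the limit $|x|\to\infty$ inside the $\nu_{UL}$-integral. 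I appeal to Lemma \ref{lemma:integral}, which reduces a two-dimensional integral against $\nu_{UL}$ to a one-dimensional integral against $\nu_U$ plus a $\nu'$-correction. Its hypotheses are ensured by the two parts of (\ref{eq:int-cond-CD2}): the tail condition $\dint_{|z|\geq 1}\log|z|\,\nu_{UL}(\d z)<\infty$ supplies a dominating function at infinity, while $\int_{-3/2}^{-1/2}|\log|1+z||\,\nu_U(\d z)<\infty$ controls the singularity of $\log|1+z_1|$ at $z_1=-1$. The resulting limit is
\[
\lim_{|x|\to\infty}\mathcal{A}f(x)=\gamma_U-\frac{\sigma_U^2}{2}+\int_\R\bigl[\log|1+z|-zI(|z|\leq 1)\bigr]\nu_U(\d z)+\int_{-1}^1 z\,\nu'(\d z)=:\kappa,
\]
and $\kappa<0$ precisely by the drift assumption (\ref{eq:ineqCD2}).

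To conclude, set $c=-\kappa/2>0$ and pick $R>e$ so that $\mathcal{A}f(x)\leq -c$ for $|x|>R$; put $C=[-R,R]$. The function $\mathcal{A}f$ is bounded on $C$ (the local part is continuous, and the integral term is finite under (\ref{eq:int-cond-CD2})), so there exists $d>0$ with $\mathcal{A}f(x)\leq -c+d$ on $C$. Combining the two cases gives $\mathcal{A}f(x)\leq -c+d\,I_C(x)$ for all $x$, whence $\mathcal{A}_nf(x)\leq -c+d\,I_C(x)$ for $|x|<n$, $n\in\N$, which is (\ref{eq:CD2}) with $g\equiv 1$.

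The main obstacle is the limit passage absorbed by Lemma \ref{lemma:integral}: both the tails at infinity and the singularity of $\log|1+z_1|$ at $z_1=-1$ must be controlled simultaneously, and the correction term $\int_{-1}^1 z\,\nu'(\d z)$ in (\ref{eq:ineqCD2}) is precisely what arises from the mismatch between the $\nu_{UL}$- and $\nu_U$-compensators on the set $\{|z|>1,\,|z_1|\leq 1\}$.
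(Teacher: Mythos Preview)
Your approach is the paper's: invoke Lemma \ref{lemma:integral} and read off the limit of $\mathcal{A}f(x)$. There is, however, a small gap in how you describe the role of the second integrability condition. Lemma \ref{lemma:integral} only reduces the two-dimensional $\nu_{UL}$-integral to
\[
\int_\R \bigl[f(x+xz)-f(x)-f'(x)\,xz\,I(|z|\leq 1)\bigr]\,\nu_U(\d z)+f'(x)\,x\int_{-1}^1 z\,\nu'(\d z)+o(1),
\]
and this one-dimensional integral still depends on $x$. The condition $\int_{-3/2}^{-1/2}|\log|1+z||\,\nu_U(\d z)<\infty$ is \emph{not} a hypothesis of Lemma \ref{lemma:integral}; its hypotheses are the structural properties of $f$ together with the tail condition $\dint_{|z|\geq 1}\log|z|\,\nu_{UL}(\d z)<\infty$ (and $\nu_U(\{-1\})=0$, which is implied by the second condition). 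The passage from the displayed one-dimensional integral to $\int_\R[\log|1+z|-zI(|z|\leq 1)]\,\nu_U(\d z)$ is a separate step, and it is here that the singularity condition enters. The paper handles it by observing that for $|1+z|>e/|x|$ the integrand already equals $\log|1+z|-zI(|z|\leq 1)$, while the remainder over $|1+z|\leq e/|x|$ is bounded by $(\log|x|+2)\,\nu_U\bigl([-1-e/|x|,\,-1+e/|x|]\bigr)\to 0$. You have correctly identified what the condition does; you have only misplaced where it is invoked.
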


\begin{proposition} \label{prop:genCDsubexp}
Assume the integrability condition (\ref{eq:int-cond-subexp})
and the drift condition (\ref{eq:ineqCD2}).
Then (\ref{eq:CDsubexp}) holds with $f(x) = (\log |x|)^\alpha$, $|x|\geq 3$, and
$\varphi(x)= x^{1-1/\alpha}$.
\end{proposition}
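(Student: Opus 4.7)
The plan is to verify the subexponential drift condition (\ref{eq:CDsubexp}) by a direct computation of $\mathcal{A}f(x)$ for $f(x)=(\log|x|)^\alpha$, extended to a nonnegative $C^2$ function on $\R$ that agrees with this formula for $|x|\geq 3$. The integrability hypothesis (\ref{eq:int-cond-subexp}) together with Proposition~\ref{prop:extgen} ensures $f\in\mathcal{DE}(V^n)$ for every $n$, so $\mathcal{A}f$ is given by (\ref{eq:def-A}). I would then insert $f'(x)=\alpha(\log|x|)^{\alpha-1}/x$ and $f''(x)=\alpha(\alpha-1)(\log|x|)^{\alpha-2}/x^2-\alpha(\log|x|)^{\alpha-1}/x^2$ into (\ref{eq:def-A}) and identify the leading order as $|x|\to\infty$. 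Writing $L:=\log|x|$, the drift term $(x\gamma_U+\gamma_L)f'(x)$ gives $\alpha\gamma_U L^{\alpha-1}+O(L^{\alpha-1}/|x|)$, the $\sigma_U^2$ contribution is $-\alpha(\sigma_U^2/2)L^{\alpha-1}+O(L^{\alpha-2})$, and the cross and $\sigma_L^2$ terms are $o(L^{\alpha-1})$.

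For the jump integral I would set $a=a(x,z):=\log|1+z_1+z_2/x|$ so that $f(x+xz_1+z_2)-f(x)=(L+a)^\alpha-L^\alpha$ whenever $|x+xz_1+z_2|\ge 3$, and split at $|z|=1$. On $\{|z|\le 1\}$ a Taylor expansion of $y\mapsto(L+y)^\alpha$ at $y=0$ yields, uniformly in $|z|\le 1$,
\[
(L+a)^\alpha - L^\alpha - \alpha L^{\alpha-1}(z_1+z_2/x) = \alpha L^{\alpha-1}\bigl[\log|1+z_1+z_2/x|-(z_1+z_2/x)\bigr] + O\bigl(L^{\alpha-2}\bigr),
\]
and dominated convergence (majorant of order $(z_1+z_2/x)^2$) sends the small-jump contribution to $\alpha L^{\alpha-1}\int_{|z|\le 1}[\log|1+z_1|-z_1]\,\nu_{UL}(\d z)+o(L^{\alpha-1})$. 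On $\{|z|>1\}$ I would use the estimate $|(L+a)^\alpha-L^\alpha|\le \alpha L^{\alpha-1}|a|+C(|a|^\alpha+1)$ together with $|a|\le \log(1+|z|)+O(1/|x|)$; the hypothesis $\int_{|z|>1}(\log|z|)^\alpha\,\nu_{UL}(\d z)<\infty$ supplies the majorant, while the local integrability near $z_1=-1$ from the second part of (\ref{eq:int-cond-subexp}) controls the region where $|1+z_1+z_2/x|$ becomes small. Since $a\to\log|1+z_1|$ pointwise off the $\nu_{UL}$-null set $\{z_1=-1\}$, dominated convergence gives the large-jump contribution $\alpha L^{\alpha-1}\int_{|z|>1}\log|1+z_1|\,\nu_{UL}(\d z)+o(L^{\alpha-1})$.

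Adding the four pieces and applying Lemma~\ref{lemma:integral} to rewrite the resulting $\nu_{UL}$-integral as
\[
\iint_{\R^2}\bigl[\log|1+z_1|-z_1I(|z|\le 1)\bigr]\nu_{UL}(\d z)=\int_\R\bigl[\log|1+z|-zI(|z|\le 1)\bigr]\nu_U(\d z)+\int_{-1}^{1}z\,\nu'(\d z)
\]
(the $\nu'$-term arises precisely from $I(|z_1|\le 1)-I(|z|\le 1)=I(|z_1|\le 1,\,|z|>1)$), one obtains $\mathcal{A}f(x)=\alpha D\,(\log|x|)^{\alpha-1}(1+o(1))$ as $|x|\to\infty$, where $D$ is the left-hand side of (\ref{eq:ineqCD2}), strictly negative by hypothesis. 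Hence for some $c>0$ and $M$ large, $\mathcal{A}f(x)\le -c\varphi(f(x))$ whenever $|x|\ge M$, while on the compact set $C=\{|x|\le M\}$ the quantity $\mathcal{A}f$ is bounded; rescaling $f$ by the constant $c^{-\alpha/(\alpha-1)}$ (which preserves the form $\mathrm{const}\cdot(\log|x|)^\alpha$) normalizes the coefficient of $\varphi(f)$ to $1$ and yields (\ref{eq:CDsubexp}). The main technical obstacle is the uniform-in-$x$ control of the large-jump integrand by a $\nu_{UL}$-integrable majorant that leaves an $o(L^{\alpha-1})$ error after dividing by $\alpha L^{\alpha-1}$; this is exactly why (\ref{eq:int-cond-subexp}) strengthens the log-moment by the power $\alpha$ compared with the ergodicity case, whereas the drift condition (\ref{eq:ineqCD2}) remains identical because it captures only the leading-order coefficient in front of $(\log|x|)^{\alpha-1}$.
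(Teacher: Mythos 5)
Your proposal is correct in substance and lands on the same asymptotic identity as the paper, namely $\mathcal{A}f(x)=\alpha D\,(\log|x|)^{\alpha-1}(1+o(1))$ with $D$ the left-hand side of (\ref{eq:ineqCD2}) and $\varphi(f(x))=(\log|x|)^{\alpha-1}$, but you reach it by a genuinely different route. The paper proves Proposition \ref{prop:genCDsubexp} as an instance of the general Proposition \ref{prop:CDsubexp-gen}: it first applies Lemma \ref{lemma:integral} in full to reduce the bivariate jump integral to the univariate $\nu_U$-integral plus the $\nu'$-correction, then evaluates the univariate integral via regular variation of $g(x)=xf'(x)$ (uniform convergence theorem, Potter bounds), and finally relaxes the condition $\int_{-3/2}^{-1/2}|1+z|^{-\varepsilon}\nu_U(\d z)<\infty$ demanded by Proposition \ref{prop:CDsubexp-gen} down to log-integrability using $(1-y)^\alpha\ge 1-2\alpha y$. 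You instead Taylor-expand $(\log|x|+a)^\alpha$ directly with $a=\log|1+z_1+z_2/x|$ and invoke only the marginalization identity at the end (which, note, is the $I_2$ computation inside the proof of Lemma \ref{lemma:integral}, not the lemma's statement). Your version is more elementary and self-contained for this particular $f$; the paper's detour through Proposition \ref{prop:CDsubexp-gen} buys a uniform explanation of why the drift condition is identical in Theorems \ref{thm:genCD2}, \ref{thm:gensubexp} and \ref{thm:gensubexp2}. Your heuristic for the role of the strengthened log-moment and the rescaling remark are essentially right (though the normalizing constant should be $c^{-\alpha}$, not $c^{-\alpha/(\alpha-1)}$; alternatively keep $c\varphi$ as the concave rate, which (\ref{eq:CDsubexp}) permits).

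One step you should not wave through: the claim that the Taylor expansion holds "uniformly in $|z|\le 1$" with error $O(L^{\alpha-2})$ fails near $z_1=-1$, which belongs to the small-jump region as well as the large-jump one. There $a$ is unbounded below, $x+xz_1+z_2$ may even land in $[-3,3]$ where $f$ is the bounded extension, and $f(x+xz_1+z_2)-f(x)$ is of order $-(\log|x|)^{\alpha}$, i.e.\ larger by a factor $\log|x|$ than the terms you keep. This region must be excised explicitly: on $\{|1+z_1|\le|x|^{-\delta}\}$ one bounds the contribution by $C(\log|x|)^{\alpha}\,\nu_U((-1-|x|^{-\delta},-1+|x|^{-\delta}))$ plus a $z_2$-term controlled by $\sup|f'|\cdot|z_2|$ (this is exactly what the paper's splitting $f(x+xz_1+z_2)-f(x+xz_1)=f'(\xi)z_2$ in Lemma \ref{lemma:integral} is for), and the hypothesis $\int_{-3/2}^{-1/2}|\log|1+z||\,\nu_U(\d z)<\infty$ is precisely what forces $\log|x|\cdot\nu_U((-1-|x|^{-\delta},-1+|x|^{-\delta}))\to 0$. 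You do name the right hypothesis, but you attribute its role only to the large-jump set and never state the shrinking-neighborhood estimate; writing that out is required for the proof to close.
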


\begin{proposition} \label{prop:genCDsubexp2}
Assume that (\ref{eq:int-cond-subexp2}) and (\ref{eq:ineqCD2}) hold.
Then (\ref{eq:CDsubexp}) holds with $f(x) = \exp \{ \gamma (\log |x|)^\alpha\}$, 
$|x| \geq e$, and $\varphi(x)= x \, (\log x)^{1-1/\alpha}$.
\end{proposition}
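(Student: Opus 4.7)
The plan is to verify the drift condition (\ref{eq:CDsubexp}) directly with $f(x)=\exp\{\gamma(\log|x|)^\alpha\}$, modified smoothly on some bounded set so that $f\in C^2(\R)$ and $f\ge 1$ everywhere. Before computing $\mathcal{A}f$, I would first check that $f$ lies in the domain of the extended generator: by the inclusion displayed after (\ref{eq:def-A}) it suffices to verify $\dint_{|z|>1}f(|z|)\,\nu_{UL}(\d z)<\infty$, and this is exactly the first half of (\ref{eq:int-cond-subexp2}). The second half, the integrability of $|\log|1+z||$ near $z=-1$, will be needed inside the calculation. The heart of the argument is to show that, as $|x|\to\infty$,
\[
\mathcal{A}f(x) = \gamma\alpha(\log|x|)^{\alpha-1}f(x)\bigl[D+o(1)\bigr],
\]
where $D$ is the left-hand side of (\ref{eq:ineqCD2}); the structure then parallels Propositions \ref{prop:genCD2} and \ref{prop:genCDsubexp}, with every ``rate'' contribution scaled by the factor $\gamma\alpha(\log|x|)^{\alpha-1}$ coming from the chain rule.

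For the local part, direct differentiation gives $xf'(x)=\gamma\alpha(\log|x|)^{\alpha-1}f(x)$ and
\[
x^2 f''(x)=f(x)\bigl[-\gamma\alpha(\log|x|)^{\alpha-1}+\gamma^2\alpha^2(\log|x|)^{2\alpha-2}+\gamma\alpha(\alpha-1)(\log|x|)^{\alpha-2}\bigr],
\]
whose leading term is $-\gamma\alpha(\log|x|)^{\alpha-1}f(x)$ since $\alpha<1$. Hence the drift and diffusion pieces of $\mathcal{A}f$ produce $\gamma\alpha(\log|x|)^{\alpha-1}f(x)[\gamma_U-\sigma_U^2/2]$ to leading order; the $\gamma_L$, $\sigma_{UL}$ and $\sigma_L^2$ coefficients generate only lower-order corrections in $\log|x|$.

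The integral part is split at $|z|=1$. For $|z|\le 1$ a second-order Taylor expansion of $f$ around $x$ produces, after extracting the dominant $x^2 z_1^2$ component, the contribution $-\tfrac12\gamma\alpha(\log|x|)^{\alpha-1}f(x)\int_{|z|\le1}z_1^2\,\nu_U(\d z)$; the $z_2$ and cross $xz_1z_2$ terms are controlled using Lemma \ref{lemma:integral}. For $|z|>1$ the crucial expansion is
\[
f(x+xz_1+z_2)=f(x)\,|1+z_1|^{\gamma\alpha(\log|x|)^{\alpha-1}}\bigl(1+o(1)\bigr)=f(x)\bigl[1+\gamma\alpha(\log|x|)^{\alpha-1}\log|1+z_1|\bigr]+\text{(lower order)}.
\]
Integrating and applying Lemma \ref{lemma:integral} replaces the two-dimensional $\nu_{UL}$-integral by $\int\log|1+z|\,\nu_U(\d z)$ away from $z=-1$, together with the boundary correction $\int_{-1}^1 z\,\nu'(\d z)$ coming from (\ref{eq:defnu'}). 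Combined with the leftover compensator contribution $-\gamma\alpha(\log|x|)^{\alpha-1}f(x)\int z\,I(|z|\le1)\nu_U(\d z)$, everything packs into exactly $\gamma\alpha(\log|x|)^{\alpha-1}f(x)\cdot D$.

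Since $D<0$ by (\ref{eq:ineqCD2}), for all sufficiently large $|x|$ one has $\mathcal{A}f(x)\le -c\,(\log|x|)^{\alpha-1}f(x)$ for some $c>0$. Inverting $f$ yields $\log|x|=\gamma^{-1/\alpha}(\log f(x))^{1/\alpha}$, so $(\log|x|)^{\alpha-1}f(x)$ is a constant multiple of $f(x)(\log f(x))^{1-1/\alpha}=\varphi(f(x))$; the constant can be absorbed into $c$, and the compact set $C$ in (\ref{eq:CDsubexp}) is chosen to contain the bounded region where the remainder of the inequality is not yet negative. The step I expect to be most delicate is the uniform-in-$x$ justification of the expansion $f(x+xz_1+z_2)\approx f(x)|1+z_1|^{\gamma\alpha(\log|x|)^{\alpha-1}}$ for $|z|>1$: the approximation degenerates as $z_1\to-1$, and the role of the singular integrability assumption $\int_{-3/2}^{-1/2}|\log|1+z||\,\nu_U(\d z)<\infty$ is precisely to absorb this singularity, while Lemma \ref{lemma:integral} removes the $z_2$-dependence, reducing the big-jump kernel to a univariate $\nu_U$-integral.
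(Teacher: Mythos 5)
Your overall route is the same as the paper's, which factors the computation through the general Proposition \ref{prop:CDsubexp-gen}: show $\mathcal{A}f(x)\sim xf'(x)\cdot D$ with $xf'(x)=\gamma\alpha(\log|x|)^{\alpha-1}f(x)$ a constant multiple of $\varphi(f(x))$, where $D$ is the left-hand side of (\ref{eq:ineqCD2}). However, one step of your sketch is wrong and one essential step is missing. The wrong step is the small-jump limit. For $|z_1|\le 1$ bounded away from $0$ the exact Taylor remainder gives
\[
f(x(1+z_1))-f(x)-f'(x)xz_1=x^2\int_1^{1+z_1}(1+z_1-s)f''(xs)\,\d s\sim -xf'(x)\int_1^{1+z_1}\frac{1+z_1-s}{s^2}\,\d s=xf'(x)\bigl[\log(1+z_1)-z_1\bigr],
\]
because $x^2f''(x)\sim -xf'(x)$ with $xf'(x)$ slowly varying; freezing $f''$ at the point $x$ is legitimate only as $z_1\to 0$. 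So the correct contribution of $\{|z|\le 1\}$ is $xf'(x)\int_{|z|\le1}[\log|1+z_1|-z_1]\,\nu_U(\d z)$, not $-\tfrac12\,xf'(x)\int_{|z|\le1}z_1^2\,\nu_U(\d z)$; with your value the pieces do not reassemble into $D$, so the conclusion would not match hypothesis (\ref{eq:ineqCD2}).

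The missing step is domination at infinity. You single out the degeneracy at $z_1=-1$ as the delicate point, but the bulk of the paper's proof of this proposition is devoted to the other end: the pointwise limit $\frac{f(xu)-f(x)}{xf'(x)}\to\log u$ must be integrated over $u=|1+z_1|$ up to infinity, and this needs a majorant uniform in $x$. The crude subadditivity bound $f(xu)\le k_f f(x)f(u)$ only gives $\frac{f(xu)}{xf'(x)}\le k_f\,\frac{f(x)}{xf'(x)}\,f(u)$ with $\frac{f(x)}{xf'(x)}=(\gamma\alpha)^{-1}(\log|x|)^{1-\alpha}\to\infty$, so it is not a dominating function. The paper obtains, by a case analysis on the relative sizes of $\log u$ and $\log|x|$ (using $(1+y)^\alpha\le 1+\alpha y$ and $(1+y)^\alpha-y^\alpha\le 1-y^\alpha/2$), the uniform bound $\frac{f(xu)-f(x)}{xf'(x)}\le c\bigl(\log u+e^{\gamma(\log u)^\alpha}\bigr)$, which is integrable precisely because of the hypothesis $\dint_{|z|\ge1}\exp\{\gamma(\log|z|)^\alpha\}\,\nu_{UL}(\d z)<\infty$; this is where that hypothesis (as opposed to mere $\log$-integrability) is actually used. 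Without such a majorant the interchange of limit and integral, and hence the asymptotics for $\mathcal{A}f$, is unjustified. Your treatment of the $z_2$-dependence via Lemma \ref{lemma:integral}, of the singularity at $z_1=-1$, and the final identification $(\log|x|)^{\alpha-1}f(x)=\gamma^{1/\alpha-1}f(x)(\log f(x))^{1-1/\alpha}$ are all in line with the paper.
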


\begin{proposition} \label{prop:genCD3}
Assume that (\ref{eq:int-condCD3}) and (\ref{eq:ineqCD3}) hold.
Then (\ref{eq:CD3}) holds with $f(|x|) = |x|^\beta$, $|x| \geq 1$.
\end{proposition}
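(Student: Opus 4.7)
The plan is to take $f$ equal to $|x|^\beta$ for $|x|\geq 1$, extended to a $C^2$ function on $\R$ (for instance, equal to $1$ on a neighbourhood of the origin). Condition (\ref{eq:int-condCD3}) gives $\dint_{|z|>1}|z|^\beta\,\nu_{UL}(\d z)<\infty$, so by Proposition \ref{prop:extgen} the function $f$ lies in $\mathcal{DE}(V^n)$ and $\mathcal{A}_n f$ coincides with $\mathcal{A}f$ on $\{|x|<n\}$. Since $\mathcal{A}f$ is locally bounded, it will suffice to prove
\[
\lim_{|x|\to\infty}\frac{\mathcal{A}f(x)}{f(x)}=\beta\,\Gamma,
\]
where $\Gamma$ denotes the left-hand side of (\ref{eq:ineqCD3}); since $\Gamma<0$ by assumption, enlarging $d$ to absorb the contribution from a bounded region then delivers (\ref{eq:CD3}) uniformly in $n$.

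Assume $x>0$ large (the case $x<0$ is symmetric). Then $f'(x)=\beta x^{\beta-1}$ and $f''(x)=-\beta(1-\beta)x^{\beta-2}$, so the drift and diffusion parts of $\mathcal{A}f(x)/f(x)$ tend to $\beta\gamma_U$ and $-\beta(1-\beta)\sigma_U^2/2$ respectively, matching the first two terms of $\beta\Gamma$. Factoring $x^\beta$ out of the jump integral, and putting the contribution from $|x(1+z_1)+z_2|<1$ (where the smooth extension of $f$ is used but is bounded) into an $o(1)$ term, leaves
\[
\dint_{\R^2} H_x(z)\,\nu_{UL}(\d z),\qquad H_x(z)=|1+z_1+z_2/x|^\beta-1-\beta(z_1+z_2/x)\,I(|z|\leq 1).
\]

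The next step is dominated convergence. On $\{|z|\leq 1\}$ a Taylor expansion of $|1+u|^\beta$ around $u=0$, together with boundedness of $|1+u|^\beta-1-\beta u$ on compact $u$-intervals, yields $|H_x(z)|\leq C(|z|^2\wedge 1)$, which is $\nu_{UL}$-integrable by the L\'evy-measure property combined with the finiteness of $\nu_U$ outside any neighbourhood of $0$. On $\{|z|>1\}$ the subadditivity $|a+b|^\beta\leq|a|^\beta+|b|^\beta$ (valid for $\beta\in(0,1]$) gives $|H_x(z)|\leq 2+|z_1|^\beta+|z_2|^\beta$, which is $\nu_{UL}$-integrable by (\ref{eq:int-condCD3}). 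Pointwise $H_x(z)\to|1+z_1|^\beta-1-\beta z_1\,I(|z|\leq 1)$. To match $\Gamma$, split the limit along $\{|z_1|>1\}$, $\{|z_1|\leq 1,\,|z|>1\}$ and $\{|z|\leq 1\}$: Fubini projects the $\nu_{UL}$-integrals onto integrals against $\nu_U$ and $\nu'$, and the crucial algebraic observation is that $I(|z|\leq 1)$ differs from $I(|z_1|\leq 1)$ precisely on $\{|z|>1,\,|z_1|\leq 1\}$, so that changing the compensator produces exactly the extra term $\beta\int_{-1}^1 z\,\nu'(\d z)$ appearing in $\Gamma$. Summing the drift, diffusion and jump limits yields $\beta\Gamma$.

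The main obstacle is this last bookkeeping: controlling $H_x$ uniformly in $x$ on the compensated region (in particular near the zero of $|1+u|^\beta$ at $u=-1$) and passing from a bivariate integral against $\nu_{UL}$ to a univariate integral against $\nu_U$ plus the $\nu'$ correction; this reduction is precisely the content of Lemma \ref{lemma:integral}, which the paper establishes separately and which the computation above invokes at the projection step.
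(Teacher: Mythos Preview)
Your proof is correct and follows essentially the same strategy as the paper: both arguments show $\mathcal{A}f(x)\sim \beta\Gamma\,|x|^\beta$ by analyzing the jump integral and then reading off (\ref{eq:CD3}). The only organizational difference is the order of two steps. The paper first invokes Lemma~\ref{lemma:integral} to replace the bivariate integral by $\int[f(x+xz)-f(x)-f'(x)xz\,I(|z|\le 1)]\,\nu_U(\d z)+f'(x)x\int_{-1}^1 z\,\nu'(\d z)+o(|x|^\beta)$, and \emph{then} computes the one-dimensional integral by cutting out a shrinking neighbourhood of $z=-1$. You instead apply dominated convergence directly to the bivariate integrand $H_x$, and project onto $\nu_U$ and $\nu'$ only after passing to the limit; this is a legitimate and slightly more self-contained route for this particular $f$, since the explicit dominants $C(|z|^2\wedge 1)$ and $2+|z_1|^\beta+|z_2|^\beta$ are easy to write down here. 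One small remark: your closing sentence overstates what is needed at the end. The projection step after dominated convergence is plain Fubini (splitting $\nu_{UL}$ over the three regions and using $\nu^1+\nu'=\nu_U|_{[-1,1]}$, $\nu^2=\nu'+\nu_U|_{\{|z|>1\}}$), not the full strength of Lemma~\ref{lemma:integral}; your own dominated-convergence argument has already absorbed the $z_2$-contribution that the lemma handles for general $f$.
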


\medskip

Without any petiteness condition (\ref{eq:CD1}) implies that the process is 
nonevanescent (Theorem 3.1 \cite{MT3}). Moreover, according to Theorem 4.5 \cite{MT3} 
condition (\ref{eq:CD2}) together with the weak Feller property implies the 
existence of an invariant probability measure $\pi$.

\subsection{Petite sets} \label{subsect:petite}

In this subsection we give sufficient condition for all compact sets to be petite sets 
for some skeleton chain. Under natural assumptions this condition holds.
%
%

By investigating ergodicity rates a minimal necessary assumption is that the process 
converges in distribution. If $V_t$ converges in distribution for 
any initial value $x_0 \in \R$ then $V$ is $\pi$-irreducible, where $\pi$ is the law of 
the limit distribution. Certain properties of the limit distribution imply that compact 
sets are petite sets. Recall the relation $(U,L)$ and $(\eta, \xi)$ from 
(\ref{eq:def-eta}), (\ref{eq:def-xi}).

%

\begin{proposition} \label{prop:petite-2}
Assume that $\lim_{t \to \infty} \xi_t = \infty$ a.s., 
$\int_0^\infty e^{-\xi_{s-}} \d L_s $ exists a.s., and its distribution $\pi$ is 
such that the interior of its support is not empty. Then all compact sets are 
petite sets for the skeleton chain $(V_n)_{n \in \N}$.
\end{proposition}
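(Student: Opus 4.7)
The plan is to combine a pathwise coupling of processes started from different initial conditions with Meyn and Tweedie's weak Feller $T$-chain theory. From the representation $V_t^x = e^{-\xi_t}\bigl[x + \int_0^t e^{\xi_{s-}}\,\d\eta_s\bigr]$ the map $x \mapsto V_t^x$ is affine in $x$, so on a single probability space $V_t^x - V_t^{x'} = (x - x')\,e^{-\xi_t}$. Since $\xi_t \to \infty$ a.s.\ by hypothesis, $\sup_{x \in C} |V_t^x - V_t^0| \to 0$ a.s.\ for every compact $C \subset \R$.

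By the Lindner--Maller duality $V_t^0 \stackrel{\mathcal{D}}{=} \int_0^t e^{-\xi_{s-}}\,\d L_s$ (Proposition 2.3 of \cite{LM} recalled in the excerpt) and the hypothesis that the right-hand side converges a.s., the law of $V_t^0$ converges weakly to $\pi$. The coupling gives, for any bounded Lipschitz $f$ of constant $L$ and any $\varepsilon > 0$, the bound $|\E_x f(V_t) - \E_0 f(V_t)| \leq 2\,\p(|x|\,e^{-\xi_t} > \varepsilon) + L\varepsilon$, so the weak convergence is uniform in $x \in C$. By Portmanteau, $\liminf_{n \to \infty} \inf_{x \in C}\p_x\{V_n \in O\} \geq \pi(O)$ for every open $O$. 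Taking $O$ to be an open ball around some $y_0$ in the nonempty interior of $\supp \pi$, contained in $\supp \pi$, one finds $n_0 \in \N$ and $c > 0$ with $\inf_{x \in C}\p_x\{V_{n_0} \in O\} \geq c$.

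Because $\xi$ in (\ref{eq:def-xi}) is only defined when $\nu_U((-\infty,-1]) = 0$, the hypothesis implicitly contains this, so by Theorem 3.1 of \cite{BL15} (cited in the excerpt) $V$, and hence the skeleton chain $(V_n)$, is weak Feller. The Portmanteau bound above also yields open-set irreducibility: $\sum_n \p_x\{V_n \in O\} > 0$ for every $x \in \R$ and every open $O$ with $\pi(O) > 0$.

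The main obstacle is passing from the open-set minorization above to genuine petiteness, i.e.\ to the domination of $\p_x\{V_{n_0} \in \cdot\}$ by a nontrivial measure on all Borel sets, uniformly in $x \in C$. This is supplied by the weak Feller $T$-chain criterion of Meyn and Tweedie \cite{MT2, MT3}: a weak Feller chain that is open-set irreducible with respect to some measure whose support has nonempty interior is a $T$-chain, and every compact set in a $\psi$-irreducible $T$-chain is petite. Applying this with $\psi = \pi$ completes the proof.
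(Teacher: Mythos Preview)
Your argument is correct and follows essentially the same route as the paper: establish that the skeleton chain converges in distribution to $\pi$ from every starting point, deduce (open-set) $\pi$-irreducibility, and then invoke the Meyn--Tweedie weak Feller criterion (Theorem~3.4 in \cite{MT1}, equivalently the $T$-chain machinery you cite) together with the nonempty interior of $\supp\pi$ to conclude that compacts are petite. The paper's proof is terser---it simply asserts the convergence (relying on \cite[Theorem~2.1(a)]{BLM}) and applies Theorem~3.4 of \cite{MT1} directly---whereas you spell out the coupling $V_t^x - V_t^{x'} = (x-x')e^{-\xi_t}$ and the weak Feller property explicitly, but the substance is the same.
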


The assumptions above imply the existence of a stationary causal solution to 
(\ref{eq:SDEGOU}). Moreover, for any initial value $x_0$ the solution converges in 
distribution to the stationary solution; see \cite[Theorem 2.1 (a)]{BLM}. Necessary and 
sufficient conditions for $\lim_{t \to \infty} \xi_t = \infty$ a.s., and for 
the existence of $\int_0^\infty e^{\xi_{s-}} \d L_s $ are given in \cite[Theorem 
3.5 and 3.6]{BLM}.

In the L\'evy-driven OU case, when $U_t = - \mu t$, $\mu >0$, the convergence holds 
if and only if $\int_{|z| > 1} \log |z| \nu_L(\d z) < \infty$, in which case the limit 
distribution is self-decomposable; see \cite[Theorem 4.1]{SY}. A nondegenerate 
selfdecomposable distribution has absolute continuous density with respect to the 
Lebesgue measure, in particular the interior of its support is not empty;
see \cite[Theorem 28.4]{S}.

In the general case, there is much less known about the properties of the integral 
$J= \int_0^\infty e^{-\xi_{s-}} \d L_s$. Continuity properties of these integrals were 
investigated in \cite{BLM08}. It was shown in \cite[Theorem 2.2]{BLM08} that if $\pi$ has 
an atom then it is necessarily degenerate. In particular, if $\xi$ is 
spectrally negative (does not have positive jumps), then $\pi$ is still self-decomposable; 
see \cite{BLM08} Theorem 2.2, and the remark after it. 
When $L_t = t$ sufficient conditions for the existence of the density of
$\int_0^\infty e^{-\xi_{s}} \d s$ were given in \cite[Proposition 2.1]{CPY2}.
When $L$ and $U$ are independent, $\E |\xi_1| < \infty$, $\E \xi_1 > 0$, 
$\E |\eta_1 | < \infty$, 
and $\sigma_U^2 + \sigma_L^2 > 0$ then $J$  has 
continuously differentiable density; see \cite[Corollary 2.5]{KPS}.
The case, when $(\xi_t, L_t)_{t \geq 0} = ((\log c) N_t, Y_t)$, $c >1$, where $(N_t)_{t 
\geq 0}, (Y_t)_{t \geq 0}$ are Poisson processes, and $(N_t, Y_t)_{t \geq 0}$ is a 
bivariate L\'evy process was treated in \cite{LS}. 
Whether the distribution of $J$ is absolute continuous or continuous singular depends 
on algebraic properties of the constant $c$, see Theorems 3.1 and 3.2 \cite{LS}.
The problem of absolute continuity in this case is closely related to infinite Bernoulli 
convolutions; see Peres, Schlag, and Solomyak \cite{PSS}.
For further results in this direction we refer to
\cite{BLM08, LS, KPS} and the references therein.

%
%

\section{Proofs} \label{sect:proofs}

First we show that the domain of the extended generator is large enough, and contains 
usual norm-like functions, which are not bounded. In Subsection \ref{subsect:proof-drift} 
after some preliminary technical lemmas we prove that the various drift conditions hold. 
Subsection \ref{subsect:proof-petite} contains the proof of the sufficient condition for 
the petiteness assumption. Finally, we prove the main theorems, which are easy 
consequences of the drift conditions and some general results from \cite{MT1, MT3, DFG}.

\subsection{Extended generator and infinitesimal generator}

In the following $(\mathcal{F}_t)_{t \geq 0}$ stands for the natural filtration induced 
by the bivariate L\'evy process $(U,L)$. Martingales are meant to be martingales with 
respect to $(\mathcal{F}_t)_{t \geq 0}$.

\begin{proposition} \label{prop:extgen}
Assume that $f \in C^2$, and for each fixed $n \in \N$,
\begin{equation} \label{eq:extgen-cond}
\sup_{|x| \leq n} \dint_{|x+x z_1 + z_2| > m}
\left( 1 + | f(x+x z_1 + z_2)| \right) \nu_{UL}(\d z) =: \eta_m^n < \infty,
\end{equation}
and $\lim_{m \to \infty} \eta_m^n = 0$.
Then $f \in \mathcal{DE}(V^n)$, $n \in \N$, and $f \in \mathcal{DE}(V)$.
\end{proposition}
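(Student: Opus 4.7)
The plan is to apply It\^o's formula to $f(V_t)$ stopped at $T^n$, identify the drift with $\int_0^t \mathcal{A}_n f(V_s^n)\,\d s$, and verify that the residual is a local martingale; then $f\in\mathcal{DE}(V)$ follows by letting $n\to\infty$ and using the localizing sequence $T^n\nearrow\infty$ (which holds because the SDE~(\ref{eq:SDEGOU}) has a non-exploding solution by \cite[Proposition 3.2]{BLM}).

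Using the L\'evy--It\^o decomposition of $(U,L)$ with Poisson random measure $N$ and compensated measure $\tilde N$, the SDE $\d V_t = V_{t-}\,\d U_t + \d L_t$ expresses $V$ as
\[
V_t = V_0 + \int_0^t (\gamma_U V_{s-}+\gamma_L)\,\d s + M_t^c + \int_0^t\!\!\int_{|z|\leq 1}(V_{s-}z_1+z_2)\,\tilde N(\d s,\d z) + \int_0^t\!\!\int_{|z|>1}(V_{s-}z_1+z_2)\,N(\d s,\d z),
\]
where $M^c$ is a continuous martingale with bracket $(\sigma_U^2 V_{s-}^2 + 2\sigma_{UL}V_{s-}+\sigma_L^2)\,\d s$. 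Applying It\^o's formula to $f(V_t)$, rewriting the jump sum as an integral against $N$, and compensating the small-jump part via Taylor's theorem for $f\in C^2$ collects all absolutely continuous pieces into $\int_0^t \mathcal{A}f(V_{s-})\,\d s$; the remainder is a sum of a continuous martingale integral, a small-jump compensated Poisson integral, and a large-jump compensated Poisson integral.

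Stopping at $T^n$, on $[0,T^n)$ we have $|V_{s-}|\leq n$, so $f$, $f'$, $f''$ are bounded there, and the continuous martingale and small-jump compensated integrals are true $L^2$ martingales on $[0,T^n]$. For the large-jump compensator, split $\{|z|>1\}$ according to whether $|x+xz_1+z_2|\leq m$ or $>m$: the first region yields a bounded integrand times the finite measure $\nu_{UL}|_{\{|z|>1\}}$, and the second contributes at most $\eta_m^n$ by~(\ref{eq:extgen-cond}); hence $\sup_{|x|\leq n}\int_{|z|>1}(1+|f(x+xz_1+z_2)|)\,\nu_{UL}(\d z)<\infty$, so the large-jump compensator is absolutely integrable on $[0,T^n]$ and the corresponding compensated integral is a martingale. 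Combining the pieces shows $f(V_{t\wedge T^n}) - \int_0^{t\wedge T^n}\mathcal{A}f(V_{s-})\,\d s$ is a martingale, i.e.~$f\in\mathcal{DE}(V^n)$ with extended generator $\mathcal{A}_n$. Since $T^n\nearrow\infty$ a.s.\ and $V_s=V_s^n$ on $[0,T^n)$, the family $\{T^n\}$ localizes $f(V_t)-\int_0^t\mathcal{A}f(V_s)\,\d s$, giving $f\in\mathcal{DE}(V)$.

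The main obstacle, and the reason for the precise form of~(\ref{eq:extgen-cond}), is verifying absolute integrability of the large-jump compensator when $f$ is unbounded: the vanishing $\eta_m^n\to 0$ is exactly the quantitative statement that jumps capable of sending $V$ far beyond any ball of radius $m$ contribute negligibly to the compensator, uniformly over pre-jump positions in $\{|x|\leq n\}$. Everything else is standard It\^o calculus once this uniform control is in hand.
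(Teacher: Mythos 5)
Your proof is correct in outline, but it takes a genuinely different route from the paper. You work directly with the L\'evy--It\^o decomposition of $(U,L)$ and It\^o's formula for $f(V_t)$, and use (\ref{eq:extgen-cond}) to verify integrability of the large-jump compensator on $[0,T^n]$; the key point, which you identify correctly, is that for $|x|\le n$ the integral $\int_{|z|>1}\bigl(1+|f(x+xz_1+z_2)|\bigr)\nu_{UL}(\d z)$ is uniformly bounded, because the region $|x+xz_1+z_2|\le m$ contributes a bounded integrand against the finite measure $\nu_{UL}|_{\{|z|>1\}}$ and the remainder is at most $\eta_m^n$. The paper instead avoids stochastic calculus altogether: it starts from the cited fact that $\mathcal{A}$ is the infinitesimal generator on a core containing $C_c^\infty$ (so Dynkin's formula gives true martingales for compactly supported $g$), approximates a nonnegative $f$ monotonically by compactly supported $g_m\in\mathcal{DI}(V)$ with $g_m\equiv f$ on $[-m,m]$, shows $|\mathcal{A}_n(g_\ell-g_m)|\le\eta_m^n$ on $\{|x|<n\}$, and passes the martingale property to the limit in $L^1$; this uses the full hypothesis $\eta_m^n\to0$ in an essential way. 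Your argument, by contrast, only needs $\eta_m^n<\infty$ for a single $m$, so if all the standard It\^o-calculus steps are written out (the Taylor bound for the compensated small-jump correction on $\{|V_{s-}|\le n\}$, and the fact that an integrable compensator makes the compensated large-jump integral a true martingale), you actually prove the statement under a weaker hypothesis. The trade-off is that you must rederive the semimartingale decomposition of $f(V)$ and justify each martingale claim from scratch, whereas the paper outsources the identification of the generator to \cite{BL15} and only has to control an approximation error; be aware that your sketch leaves these verifications implicit, though none of them fails.
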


We use this proposition for norm-like functions $f$, which for $|x|$ large enough 
equals to $(\log |x|)^\alpha$, $\alpha \geq 1$, $\exp\{ \gamma (\log x)^\alpha\}$, 
$\gamma > 0, \alpha \in (0,1)$, or $|x|^\beta$, $\beta \in (0,1]$. For these `nice' 
functions 
(\ref{eq:extgen-cond}) is satisfied when
$\dint_{\R^2} f(|z|) \nu_{UL}(\d z) < \infty$.

\begin{proof}
First let $f \in \mathcal{DI}(V)$. It is well-known that
\[
M_t = f(V_t) - \int_0^t \mathcal{A}f(V_s) \d s, \quad t \geq 0, 
\]
is martingale. Consider the stopping time $T_n = \inf\{ t \geq 0: |V_t | \geq n\}$, then
by (\ref{eq:V-trunc})
\begin{equation} \label{eq:stop}
\begin{split}
M_{t \wedge T_n} 
&  =  f(V_{t \wedge T_n}) - \int_0^{t \wedge T_n} \mathcal{A}f(V_s) \d s \\
&  =  f(V_t^n) - \int_0^{t \wedge T_n} \mathcal{A}_n f(V_s) \d s \\
&  =  f(V_t^n) - \int_0^{t} \mathcal{A}_n f(V_s^n) \d s,
\end{split}
\end{equation}
where we used that $|V_{s-}| < n$ if and only if $s \leq T_n$. Since $M_{t \wedge T_n}$ is 
a 
martingale, we have proved that $\mathcal{DI}(V) \subset \mathcal{DE}(V^n)$.
\smallskip 

Now we handle the general case. We may and do assume that $f$ is nonnegative.
Consider a sequence of nonnegative functions 
$\{ g_m \} \subset \mathcal{DI}(V)$ with the following properties: $g_m(x) \equiv f(x)$ 
for $|x| \leq m$, and $\equiv 0$ for $|x| \geq m+1$,
$\max_{x \in \R} g_m(x) \leq \sup_{x \in [-m,m]} f(x) +1$, and
$g_m \leq g_{m+1}$. Define the martingales
\[
M_m(t) = g_m(V_t^n) - \int_0^t \mathcal{A}_n g_m(V_{s}^n) \d s. 
\]
Let $ \ell \geq m$ and to ease the notation put $h(x) = g_\ell(x) - g_m(x)$. Since
$h(x) \equiv 0$ for $|x| \leq m$ and for $|x| \geq \ell + 1$ we have for 
$|x| \leq n < m$
\[
\begin{split}
\mathcal{A}_n h(x) & =  
(x \gamma_U + \gamma_L) h'(x) + \frac{1}{2} ( x^2 \sigma_U^2 + 
2 x \sigma_{UL} + \sigma_L^2) h''(x) \\
& \phantom{=} \, + \dint_{\R^2} \left[
h(x + x z_1 + z_2) - h(x) - h'(x) (x z_1 + z_2) I(|z| \leq 1) \right] \nu_{UL}(\d z) \\
& = \dint_{\R^2} h(x + x z_1 + z_2) \nu_{UL}(\d z).
\end{split} 
\]
Thus for all $|x| < n$
\[
| \mathcal{A}_n h(x) | \leq  \eta_m = \eta_m^n,
\]
therefore we have
\begin{equation} \label{eq:Aineq1}
\left| \int_0^t \mathcal{A}_n [g_\ell(V_{s}^n) - g_m(V_{s}^n)] \d s \right| \leq 
t \eta_m.
\end{equation}
Using  $g_\ell \geq g_m$ and the martingale property
\[
\begin{split}
\E | g_\ell(V_t^n) - g_m(V_t^n) | & = 
\E [   g_\ell(V_t^n) - g_m(V_t^n) ] \\
& = \E \left( M_\ell(t) - M_m(t) + 
\int_0^t \mathcal{A}_n [g_\ell(V_{s}^n) - g_m(V_{s}^n)] \d s \right) \\
& = \E \int_0^t \mathcal{A}_n [g_\ell(V_{s}^n) - g_m(V_{s}^n)] \d s,
\end{split}
\]
thus
\[
\E | g_\ell(V_t^n) - g_m(V_t^n) | \leq t \eta_m.
\]
Letting $\ell \to \infty$ Fatou's lemma gives
\[
\E [ f(V_t^n) - g_m(V_t^n) ] \leq t \eta_m. 
\]
Moreover, as in (\ref{eq:Aineq1})
\[
\left| \int_0^t \mathcal{A}_n [ f(V_{s}^n) - g_m(V_{s}^n) ] \d s \right| \leq t \eta_m.
\]
Thus we obtain for each $t \geq 0$
\[
M_m(t) \to f(V_t^n) - \int_0^t \mathcal{A}_n f(V_s^n) \d s =: M(t), 
\quad \textrm{a.s.~and in } L^1.
\]
Since $M_m$ is martingale, we have for each $0 \leq u < t$
\[
\begin{split}
& \E \left| M(u) - \E [ M(t) | \mathcal{F}_u ] \right| \\
& \leq 
\E | M(u) - M_m(u) | + \E \left| M_m(u) -  \E [ M_m(t) | \mathcal{F}_u ] \right| 
+ \E \left| \E [ M_m(t) - M(t) | \mathcal{F}_u ] \right| \\
&\leq  2 u \eta_m + \E | M_m(t) - M(t)|
\leq 4 t \eta_m \to 0,
\end{split}
\]
as $m \to \infty$. Thus 
$\E [ M(t) | \mathcal{F}_u ] = M(u)$ a.s., i.e.~$M$ is a martingale, and
$f \in \mathcal{DE}(V^n)$.

Finally, (\ref{eq:stop}) shows that $f(V_t) - \int_0^t \mathcal{A}f(V_s) \d s$ is a local 
martingale with localizing sequence $T_n$.
\end{proof}

\subsection{Drift conditions} \label{subsect:proof-drift}

We frequently use the following technical lemma. Recall the definition $\nu'$ from
(\ref{eq:defnu'}).

\begin{lemma} \label{lemma:integral}
Let $f$ be a norm-like $C^2$ function, such that $f$ is even, there exists $k_f>0$ 
such that $f(x+y) \leq k_f +f(x) + f(y)$, $x,y \in \R$, 
$\lim_{x \to \infty} x \sup_{|y| \geq x} |f''(y)| = 0$,
$\lim_{x \to \infty} f'(x) =0$, and 
$\dint_{\R^2} f(|z|) \nu_{UL}(\d z) < \infty$. Assume that $\nu_U(\{-1\}) =0$. Then
\[
\begin{split}
&\dint_{\R^2} [ f(x+x z_1 +z_2) - f(x) - f'(x) (x z_1 +  z_2) I(|z| \leq 1) ]
\nu_{UL}(\d z) \\
&=
\int_{\R} [ f(x+x z) - f(x) - f'(x) x z I(|z| \leq 1) ] \nu_U(\d z) + 
f'(x) x \int_{-1}^1 z \nu'(\d z) + o(1),
\end{split}
\]
where $o(1) \to 0$ as $|x| \to \infty$. Moreover, the same holds with $O(1)$ when $f'(x)$ 
is bounded.
\end{lemma}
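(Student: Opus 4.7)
The plan is to reduce the two-dimensional $\nu_{UL}$-integral on the left to the one-dimensional $\nu_U$-integral on the right via the marginal identity $\int_\R \phi(z_1)\, \nu_U(\d z_1) = \dint_{\R^2} \phi(z_1)\, \nu_{UL}(\d z)$, and then to analyze the residual. Denoting the two integrands by $\Phi(x,z)$ and $\Psi(x,z_1)$, a short algebraic manipulation gives
\begin{equation*}
\Phi(x,z) - \Psi(x,z_1) = \bigl[f(x+xz_1+z_2) - f(x+xz_1)\bigr] + f'(x)\,xz_1\bigl[I(|z_1|\leq 1) - I(|z|\leq 1)\bigr] - f'(x)\,z_2\,I(|z|\leq 1).
\end{equation*}
Since $|z| \geq |z_1|$, $I(|z_1|\leq 1) - I(|z|\leq 1) = I(|z_1|\leq 1,\, |z|>1)$, and the definition~(\ref{eq:defnu'}) of $\nu'$ makes the middle summand integrate to exactly $f'(x)\,x \int_{-1}^1 z\, \nu'(\d z)$. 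The lemma therefore reduces to proving
\begin{equation*}
B(x) \;:=\; \dint_{\R^2} \bigl[f(x+xz_1+z_2) - f(x+xz_1) - f'(x)\, z_2\, I(|z|\leq 1)\bigr] \nu_{UL}(\d z) \;\longrightarrow\; 0, \quad |x| \to \infty.
\end{equation*}
This integral is well-defined as a whole even if its two pieces are not individually absolutely integrable: on $\{|z| \leq 1\}$ a second-order Taylor expansion bounds the integrand by $C(1+|x|)|z|^2$ (using $|z_1 z_2| \leq |z|^2/2$), and on $\{|z|>1\}$ subadditivity bounds it by $k_f + f(|z_2|)$.

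I would split $B = B^{>} + B^{\leq}$ at $|z| = 1$. For $B^{>}$, the hypothesis $\nu_U(\{-1\}) = 0$ gives $z_1 \neq -1$ on a set of full $\nu_{UL}$-measure, so $|x(1+z_1)| \to \infty$ pointwise, and the mean value theorem combined with $\lim_{|x|\to\infty} f'(x) = 0$ makes the integrand vanish pointwise. Dominated convergence with the integrable majorant $k_f + f(|z_2|)$ (integrable on $\{|z|>1\}$ by $|z_2| \leq |z|$, monotonicity of $f$ at infinity, and $\dint f(|z|)\nu_{UL}(\d z) < \infty$) yields $B^{>}(x) \to 0$.

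For $B^{\leq}$ I Taylor-expand the integrand as $[f'(x+xz_1) - f'(x)]z_2 + \tfrac12 f''(\eta) z_2^2$, and one further mean value theorem gives $[f'(x+xz_1) - f'(x)]z_2 = f''(\zeta)\,xz_1 z_2$, so both terms are dominated by $|f''|\cdot(1+|x|)|z|^2$. I then split $\{|z| \leq 1\}$ into a bulk region $\{1+z_1 > \epsilon\}$ and an exceptional region $\{0 \leq 1+z_1 \leq \epsilon\}$ (the lower bound is automatic since $|z_1| \leq 1$). On the bulk, the arguments $x$, $x+xz_1$, $\zeta$, $\eta$ of $f'$ and $f''$ all have magnitude $\gtrsim \epsilon|x|$, so the assumption $\lim_{|x|\to\infty} |x|\sup_{|y|\geq|x|}|f''(y)| = 0$ gives $|x|\cdot|f''(\cdot)| = o(1/\epsilon)$ and the bulk contribution is $o(1)$ for each fixed $\epsilon$, using $\int_{|z|\leq 1} |z|^2 \nu_{UL}(\d z) < \infty$. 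On the exceptional region the key geometric fact is that $0 \leq 1+z_1 \leq \epsilon$ together with $|z|\leq 1$ forces $z_2^2 \leq 1-z_1^2 \leq 2(1+z_1) \leq 2\epsilon$, hence $|z_2| \leq \sqrt{2\epsilon}$; and the $\nu_{UL}$-measure of this region is bounded by $\nu_U([-1, -1+\epsilon])$, which tends to $\nu_U(\{-1\}) = 0$ as $\epsilon \downarrow 0$. The uniform bounds $\|f'\|_\infty, \|f''\|_\infty < \infty$ (following from the hypotheses) then make the exceptional contribution vanish as $\epsilon \downarrow 0$.

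The main obstacle is precisely the exceptional region near $z_1 = -1$, where the large-argument decay of $f'$ and $f''$ cannot be invoked because $x(1+z_1)$ need not be large; the resolution is the interplay between the geometric shrinking of $z_2$ and the continuity of $\nu_U$ at $-1$. A standard $\epsilon$--$|x|$ diagonal argument (pick $\epsilon$ small first, then let $|x| \to \infty$) concludes $B^{\leq}(x) \to 0$. The $O(1)$ variant of the lemma, in which $f'$ is assumed only bounded rather than vanishing at infinity, follows from the same argument: the first-order Taylor term $[f'(x+xz_1) - f'(x)]z_2$ is then merely $O(1)$ rather than $o(1)$, yielding $B(x) = O(1)$.
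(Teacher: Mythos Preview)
Your proof is correct and follows essentially the same route as the paper's: both isolate the integral $B(x)$ (the paper's $I_1$) and show it is $o(1)$ by splitting at $|z|=1$ and treating a neighbourhood of $z_1=-1$ separately, while the remaining piece yields the $\nu_U$-integral together with the $\nu'$-correction. Your extraction of the $\nu'$-term via the indicator identity $I(|z_1|\le 1)-I(|z|\le 1)=I(|z_1|\le 1,\,|z|>1)$ and your use of dominated convergence for $B^{>}$ are somewhat cleaner than the paper's explicit measure decomposition $\nu^1,\nu^2$ and its $\varepsilon$--$R$--$\delta$ argument for $I_{12}$.

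One small inaccuracy in your last sentence: in the $O(1)$ variant the first-order Taylor term in $B^{\le}$ is still $o(1)$ on the bulk, since your own bound $f''(\zeta)\,xz_1z_2$ with $|\zeta|\ge\epsilon|x|$ uses only the hypothesis $x\sup_{|y|\ge x}|f''(y)|\to 0$, not $f'\to 0$. The $O(1)$ actually enters through $B^{>}$, where dropping $f'\to 0$ destroys the pointwise convergence needed for dominated convergence, leaving only the uniform bound from the integrable majorant $k_f+f(|z_2|)$.
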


\begin{proof}
We may write
\begin{equation*} \label{eq:int-decomp1}
\begin{split}
&\dint_{\R^2} [ f(x+x z_1 +z_2) - f(x) - f'(x) (x z_1 +  z_2) I(|z| \leq 1) ]
\nu_{UL}(\d z) \\
& = \dint_{\R^2} [ f(x+x z_1 +z_2) - f(x+ x z_1) - f'(x) z_2  I(|z| \leq 1)]
\nu_{UL}(\d z) \\
& \phantom{=} \,  + \dint_{\R^2} [ f(x+x z_1) - f(x) - f'(x) x z_1  I(|z| \leq 1) ] 
\nu_{UL}(\d z) \\
& =: I_1 + I_2,
\end{split}
\end{equation*}
since an application of the mean value theorem implies that both integral above is 
finite. Indeed, for the integrand in $I_1$ we have for $x$ large enough
\begin{equation*} \label{eq:int-est1}
\left| f(x+x z_1 +z_2) - f(x+ x z_1) - f'(x) z_2 \right| \leq 
|z_2| (|x z_1| + |z_2| ) \max_{y} |f''(y)|,
\end{equation*}
which implies the integrability. Let $I_1 = I_{11} + I_{12}$, where $I_{11}$ stands for 
the integral on $\{ |z| \leq 1\}$, and $I_{12}$ on $\{ |z| > 1\}$.
Let $\delta > 0$ be arbitrary. For $z_1 > -1 + \delta$
\[
\begin{split}
& \dint_{|z| \leq 1, z_1 > -1+\delta}  
[ f(x+x z_1 +z_2) - f(x+x z_1) - f'(x) z_2 I(|z| \leq 1) ] \nu_{UL}(\d z) \\
& \leq \sup_{|y| \geq \delta |x|-1} |f''(y)|
\dint_{|z| \leq 1, z_1 > -1+\delta} |z_2| (|x z_1| + |z_2| )  \nu_{UL}(\d z) ,
\end{split}
\]
which goes to 0, since $ x \sup_{|y| \geq x} f''(y) \to 0$.
Cutting further the remaining set
\[
\begin{split}
& \left| \dint_{|z| \leq 1, z_1 < -1+\delta}
[ f(x+x z_1 +z_2) - f(x+ x z_1) - f'(x) z_2  I(|z| \leq 1)] \nu_{UL}(\d z) \right| \\
& \leq |f'(x)| \dint_{|z| \leq 1, z_1 < -1+\delta} |z_2| \nu_{UL}(\d z) 
+ \max_{y \in \R } |f'(y)| \dint_{|z| \leq 1, z_1 < -1+ \delta} 
|z_2| \nu_{UL}(\d z).
\end{split}
\]
Since $\delta > 0$ can be arbitrarily small we have that
$I_{11} = o(1)$.

To handle $I_{12}$ fix $\varepsilon > 0$. There is an $R > 0$ such that
$\dint_{|z| > R} [k_f + f(z_2)] \nu_{UL}(\d z) < \varepsilon$, and so
\[
\left| \dint_{|z| > R} [ f(x+x z_1 +z_2) - f(x+ x z_1) ] \nu_{UL}(\d z) \right|
\leq \dint_{|z| > R}  [k_f + f(z_2)] \nu_{UL}(\d z) < \varepsilon.
\]
Let us choose $\delta = \delta(R)$ so small that
\[
\dint_{|z| \leq R, |z_1 + 1| \leq \delta} |z_2| \nu_{UL}(\d z) < \varepsilon.
\]
This is possible, since $\nu_U(\{ -1 \} ) =0$. Then we have
\[
\begin{split}
|I_{12}  | & \leq \max_{|y| > \delta |x| - R} |f'(y)| 
\dint_{|z| \leq R, |z_1 + 1 | > \delta} |z_2| \nu_{UL}(\d z) + 
\max_{y \in \R} |f'(y)| \dint_{|z| \leq R, |z_1 + 1 | \leq \delta} |z_2| \nu_{UL}(\d z) \\
& \phantom{=} \, +  \dint_{|z| > R} [k_f + f(z_2)] \nu_{UL}(\d z) \\
& \leq \max_{|y| > \delta |x| -R} |f'(y)| 
\dint_{|z| \leq R, |z_1 + 1 | > \delta} |z_2| \nu_{UL}(\d z) +
\varepsilon ( 1 + \max_{y \in \R} |f'(y)|), 
\end{split}
\]
which proves that $I_1 = o(1)$, if $f'(x) \to 0$.
We also see that if $f'$ is only bounded then $I_{1} = O(1)$.

We turn to $I_2$.
Note that in the integrand in $I_2$ only the indicator depends on $z_2$. Put
$\nu^1(A) = \nu_{UL} ( (A \times \R) \cap \{ |z| \leq 1 \} )$,
$\nu^{2}(A) = \nu_{UL} ( (A \times \R) \cap \{ |z| > 1 \})$.
Then $\nu^2$ is a finite measure on $\R$ and
$\nu^2|_{\{ |z| > 1\} } \equiv \nu_U|_{\{ |z| > 1 \} }$,  
$\nu^2 = \nu' + \nu_U|_{\{ |z| > 1\} }$, and $\nu_1 + \nu' = \nu_U|_{\{ |z| \leq 1\} }$.
Thus
\[
\begin{split}
I_2 & = 
\int_{-1}^1 [ f(x+x z) - f(x) - f'(x) x z  ] \nu^1(\d z)
+ \int_{\R} [ f(x+x z) - f(x) ] \nu^2(\d z) \\
& =\int_{\R} [ f(x+x z) - f(x) - f'(x) x z  I(|z| \leq 1) ] \nu_{U}(\d z)
+ f'(x) x \int_{-1}^1 z \nu'(\d z),
\end{split}
\]
and the statement is proven.
\end{proof}

The following simple statement shows that if $f(x)$ is concave for large $x$, then
$f(x+y) \leq k_f + f(x) + f(y)$, for some $k_f > 0$. That is, whenever the integrability 
condition holds, our norm-like functions ($(\log |x|)^\alpha$, $\alpha \geq 1$, 
$|x|^\beta$, $\beta \in (0,1]$) satisfy the condition of Lemma \ref{lemma:integral}
and Proposition \ref{prop:extgen}. 
The lemma follows from simple properties of concave functions. We omit the proof.

\begin{lemma} \label{lemma:concave}
Assume that $f: [0,\infty) \to [0,\infty)$ is concave on the interval $[x_0,\infty)$. 
Then there is a $k_f > 0$ such that $f(x + y) \leq k_f + f(x) + f(y)$ for all
$x, y \geq 0$.
\end{lemma}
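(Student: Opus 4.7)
The plan is to reduce the claim to the classical fact that a concave nonnegative function on $[0, \infty)$ is subadditive, and then use a shift to transfer this property to our setting on $[x_0, \infty)$. The main obstacle is the bookkeeping when at least one of $x, y$ lies in the non-concave region $[0, x_0)$; the inequality on the concave part is essentially automatic.

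First I would show that any concave $g : [0, \infty) \to [0, \infty)$ is subadditive. For $a, b > 0$, writing $a = \frac{a}{a+b}(a+b) + \frac{b}{a+b} \cdot 0$ and invoking concavity gives $g(a) \geq \frac{a}{a+b} g(a+b) + \frac{b}{a+b} g(0)$, and symmetrically for $g(b)$; summing yields $g(a) + g(b) \geq g(a+b) + g(0) \geq g(a+b)$, since $g(0) \geq 0$. Applied to $g(t) := f(t + x_0)$, which is concave and nonnegative on $[0, \infty)$, this yields $f(s + t + x_0) \leq f(s + x_0) + f(t + x_0)$ for all $s, t \geq 0$.

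Next I would pass from the shifted subadditivity to the unshifted one in the concave region. Substituting $s = x - x_0$, $t = y - x_0$ for $x, y \geq x_0$ gives $f(x + y - x_0) \leq f(x) + f(y)$. Using the standard fact that for a concave function the increment $u \mapsto f(u + x_0) - f(u)$ is non-increasing on the concave domain, this increment is bounded above for every $u \geq x_0$ by its value at $u = x_0$, namely $f(2 x_0) - f(x_0) =: K_2$. Applied at $u = x + y - x_0$, this gives $f(x + y) \leq f(x) + f(y) + K_2$ for all $x, y \geq x_0$.

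Finally, for the remaining cases I would use that $M := \sup_{t \in [0, 2x_0]} f(t)$ is finite (automatic on $(x_0, \infty)$ from concavity, and satisfied on $[0, x_0]$ for all the norm-like functions to which the lemma is applied). If $x + y \leq 2 x_0$, then $f(x+y) \leq M \leq M + f(x) + f(y)$. If $x + y > 2 x_0$ but, say, $x < x_0 \leq y$, the tangent inequality at $x_0$ combined with concavity yields $f(x + y) \leq f(y) + x \max(f'(x_0^+), 0) \leq f(y) + x_0 \max(f'(x_0^+), 0) =: f(y) + K_3$. Combining the three cases, the desired inequality holds with $k_f := M + K_2 + K_3$; the only delicate point is the interplay at the boundary $x_0$, which is handled once and for all by the crude constants $M$ and $K_3$.
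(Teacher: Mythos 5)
Your argument is correct, and it is precisely the elementary concavity argument the paper has in mind when it writes ``we omit the proof,'' so there is no competing proof in the text to compare against. The core steps --- subadditivity of the nonnegative concave function $g(t)=f(t+x_0)$, the monotonicity of concave increments to convert $f(x+y-x_0)\le f(x)+f(y)$ into $f(x+y)\le f(x)+f(y)+K_2$, and crude constants for the boundary cases --- are all sound. Three small refinements. First, you are right that the statement implicitly needs $f$ bounded on $[0,x_0]$; as literally stated the lemma fails otherwise (let $f$ blow up along a sequence $t_n\downarrow 0$ while vanishing at $t_n/2$), but in the paper $f$ is always a continuous extension of a smooth function, so this is harmless. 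Second, in the mixed case $x<x_0\le y$, $x+y>2x_0$, your tangent bound invokes $f'(x_0^+)$, which for a function merely concave on $[x_0,\infty)$ can be $+\infty$ (e.g.\ $f(t)=\sqrt{t-x_0}$); this never occurs for the $C^2$ norm-like functions the lemma is applied to, but you can avoid the issue altogether by reusing your shifted subadditivity with $a=y-x_0$ and $b=x$, which gives $f(x+y)=f(a+b+x_0)\le f(y)+f(x+x_0)\le f(y)+M$. Third, since $K_2=f(2x_0)-f(x_0)$ may be negative, take $k_f=M+\max(K_2,0)+K_3$ (or the maximum of the three constants) rather than their plain sum. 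None of this affects the substance of the proof.
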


\begin{proof}[Proof of Proposition \ref{prop:genCD2}]
Let $f(x) = \log |x|$ for $|x| \geq e$, and consider a smooth 
nonnegative extension of it to $[-e,e]$. Since $\log x$ is concave
$f$ satisfies the assumptions of Lemma \ref{lemma:integral}.

We have
\[
\int_{|z+1| > e/|x|} [ f(x + x z) - f(x) - f'(x) x z I(|z| \leq 1)] \nu_U (\d z)
= \int_{|z+1| > e/|x|} [ \log |1 + z|  - z I (|z| \leq 1) ]\nu_U (\d z),
\]
and
\[
\left| \int_{-1-e/|x|}^{-1+e/|x|} [ f(x + x z) - f(x) - f'(x) x z I(|z| \leq 1)]
\nu_U (\d z) 
\right|
\leq (\log x + 2) \nu_U ([-1-e/|x|, -1+e/|x|]).
\]
The integrability condition 
$\int_{-3/2}^{-1/2} | \log | 1 + z | | \, \nu_U(\d z) < \infty$ implies that the latter 
bound tends to 0 as $|x| \to \infty$. Therefore
\[
\int_{\R} [ f(x + x z) - f(x) - f'(x) x z I(|z| \leq 1)] \nu_U (\d z)
= \int_{\R} [ \log |1 + z|  - z  I(|z| \leq 1)] \nu_U (\d z)
+ o(1).
\]

Summarizing, we obtain
\begin{equation*} \label{eq:int-decomp3}
\begin{split}
&\dint_{\R^2} [ f(x+x z_1 +z_2) - f(x) - f'(x) (x z_1 +  z_2) I(|z| \leq 1) ]
\nu_{UL}(\d z) \\
& = 
\int_{\R}  [ \log |1 + z| - z I(|z| \leq 1) ] \nu_U (\d z) 
+ \int_{-1}^1 z \nu'(\d z) + o(1).
\end{split}
\end{equation*}
Therefore
\[
\mathcal{A}_n f(x) =  
\gamma_U - \frac{\sigma_U^2}{2} + 
\int_{\R}  [ \log |1 + z| - z I(|z| \leq 1) ] \nu_U (\d z) 
+ \int_{-1}^1 z \nu'(\d z) + o(1),
\]
so (\ref{eq:CD2}) holds.
\end{proof}

Before the proofs of Propositions \ref{prop:genCDsubexp} and \ref{prop:genCDsubexp2} 
we show a more general statement, which explains why the drift condition in 
Theorems \ref{thm:genCD2}, \ref{thm:gensubexp}, and \ref{thm:gensubexp2} is the same.

\begin{proposition} \label{prop:CDsubexp-gen}
Assume that $f$ satisfies the conditions in Lemma \ref{lemma:integral},
$g(x) = x f'(x)$ is positive, slowly varying at infinity,
increases for $x > x_0 > 0$, there is a $k_g > 0$, such that $g(xu) \leq k_g g(x) g(u)$ 
for all $u, x \geq 1$, $x f''(x) \sim - f'(x)$, and there is a concave function $\varphi$ 
such that $\varphi(f(x)) = g(x)$ for $x$ large enough.  Furthermore, assume that 
$\int_{|z| > 1}  g(z) \, \log |z| \, \nu_U(\d z) < \infty$,
$\int_{-3/2}^{-1/2} |1+z|^{-\varepsilon} \, \nu_U(\d z) < \infty$ for 
some $\varepsilon > 0$, and (\ref{eq:ineqCD2}) holds.
Then (\ref{eq:CDsubexp}) holds. 
\end{proposition}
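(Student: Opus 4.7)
My plan is to use Lemma \ref{lemma:integral} to collapse the bivariate integral in $\mathcal{A}f$ to a univariate one, and then exploit the slow variation of $g(x)=xf'(x)$ to show that as $|x|\to\infty$,
\[
\mathcal{A}f(x)=g(x)\Bigl[\gamma_U-\tfrac{\sigma_U^2}{2}+\!\int_\R\![\log|1+z|-zI(|z|\le 1)]\,\nu_U(\mathrm{d}z)+\!\int_{-1}^1\! z\,\nu'(\mathrm{d}z)\Bigr]+o(g(x)).
\]
Since the bracket is strictly negative by (\ref{eq:ineqCD2}) and $g(x)=\varphi(f(x))$ for large $|x|$, this gives $\mathcal{A}f(x)\le -\tfrac{c}{2}\varphi(f(x))$ outside a compact $[-R,R]$; on $[-R,R]$ the quantity $\mathcal{A}f$ is bounded by Proposition \ref{prop:extgen}, which yields (\ref{eq:CDsubexp}) after absorbing the factor $c/2$ into $\varphi$.

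Applying Lemma \ref{lemma:integral} (whose hypotheses are assumed for $f$) expresses $\mathcal{A}f(x)$ as the sum of $(\gamma_U x+\gamma_L)f'(x)$, $\tfrac{1}{2}(\sigma_U^2 x^2+2\sigma_{UL}x+\sigma_L^2)f''(x)$, the univariate integral $J(x)=\int_\R[f(x+xz)-f(x)-f'(x)xzI(|z|\le 1)]\,\nu_U(\mathrm{d}z)$, the boundary correction $g(x)\int_{-1}^1 z\,\nu'(\mathrm{d}z)$, and $o(1)$. The drift/diffusion block is routine: $g(x)=xf'(x)$ gives $\gamma_U xf'(x)=\gamma_U g(x)$, and $xf''(x)\sim -f'(x)$ gives $\tfrac{1}{2}\sigma_U^2 x^2 f''(x)\sim -\tfrac{1}{2}\sigma_U^2 g(x)$, while the terms involving $\gamma_L$, $\sigma_{UL}$, and $\sigma_L^2$ are $o(g(x))$ since $f'(x)\to 0$. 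For $J(x)$ I would use $f(y)=f(1)+\int_1^{|y|}g(t)/t\,\mathrm{d}t$ together with the uniform convergence theorem for slowly varying functions to obtain $f(x(1+z))-f(x)=g(x)\log|1+z|+o(g(x))$ uniformly on $\{\delta\le|1+z|\le\delta^{-1}\}$. A second-order Taylor estimate handles the $f'(x)xz$ correction on $\{|z|\le 1,\,|1+z|\ge\delta\}$, and dominated convergence covers the tail $\{|z|>\delta^{-1}\}$: Lemma \ref{lemma:concave} and the Potter bound $g(xz)\le k_g g(x)g(|z|)$ supply a majorant integrable precisely because $\int_{|z|>1}g(z)\log|z|\,\nu_U(\mathrm{d}z)<\infty$.

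The genuine obstacle is the neighborhood $|1+z|<\delta$, where the singularity of $f(x(1+z))$ at $z=-1$ competes with the slow-variation gain. Here $\int_{-3/2}^{-1/2}|1+z|^{-\varepsilon}\,\nu_U(\mathrm{d}z)<\infty$ implies $\nu_U\{|1+z|\le t\}=o(t^\varepsilon)$ as $t\downarrow 0$. I would split this region into $|x(1+z)|\le 1$---where $f(x+xz)$ is bounded by a constant and the set has $\nu_U$-mass $o(|x|^{-\varepsilon})$, so the contribution is $o(g(x))$ using that $f(x)/g(x)$ grows only polynomially in $\log x$ by the defining relation $f'=g/x$---and $1<|x(1+z)|<\delta|x|$, where the slow-variation estimate still applies and the remaining integral $\int_{|1+z|<\delta}|\log|1+z||\,\nu_U(\mathrm{d}z)$ can be made arbitrarily small by sending $\delta\downarrow 0$ after $|x|\to\infty$. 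Collecting everything gives the displayed asymptotic, completing the proof as outlined.
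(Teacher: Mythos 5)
Your proposal is correct and follows essentially the same route as the paper: reduce to a univariate integral via Lemma \ref{lemma:integral}, identify the leading term $g(x)\log|1+z|$ through the representation $f(xu)-f(x)=\int_1^u g(xy)y^{-1}\,\d y$ and the uniform convergence theorem, and justify the interchange of limit and integral by majorants in the three critical regions (infinity via $g(xu)\le k_g g(x)g(u)$, the origin via a Taylor estimate with $xf''(x)\sim -f'(x)$, and a shrinking neighborhood of $z=-1$ via the $\varepsilon$-integrability assumption). The only divergence is cosmetic: near $-1$ you lean on the monotonicity of $g$ to get a $|\log|1+z||$ majorant on $1<|x(1+z)|<\delta|x|$, whereas the paper uses the Potter bound $g(xy)/g(x)\le 2y^{-\varepsilon}$ to get the majorant $\tfrac{2}{\varepsilon}|1+z|^{-\varepsilon}$; both are covered by the hypothesis $\int_{-3/2}^{-1/2}|1+z|^{-\varepsilon}\,\nu_U(\d z)<\infty$, and your treatment of the innermost region $|1+z|\le c/|x|$ (mass $o(|x|^{-\varepsilon})$ against the slowly varying ratio $f(x)/g(x)$) matches the paper's.
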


We note that whenever $g(x) = x f'(x)$ is slowly varying, $f(x)$ is also slowly 
varying, and $f(x) / g(x) \to \infty$; see \cite[Proposition 1.5.9a]{BGT}.
This is the reason why we have to assume some integrability assumptions around $-1$. In 
particular, when $\nu_U(\{ -1 \} ) > 0$ the situation is completely different.
Moreover, the same argument shows that if $x g'(x)$ is slowly varying, then $g(x)$ is 
slowly varying, and $g(x)/(x g'(x)) \to \infty$, which implies that 
$x f''(x) \sim - f'(x)$.

\begin{proof}
Since $g$ is slowly varying, for any $u > 0$ we have after a change of variables 
\begin{equation} \label{eq:eq1}
\frac{f(xu) - f(x) }{x f'(x)} = \int_1^u \frac{g(xy)}{g(x)} y^{-1} \d y \to \log u,
\end{equation}
where we used the uniform convergence theorem \cite[Theorem 1.2.1]{BGT}.
Assuming for a moment that the interchangeability of the limit and the 
integral is justified, we have
\[
\int_{\R} [ f(x + x z) - f(x) - f'(x) x z I(|z| \leq 1)] \nu_U (\d z) 
\sim x f'(x)  \int_{\R} [ \log |1 + z| - z  I(|z| \leq 1) ]  \nu_U (\d z).
\]
Thus, using also that $x^2 f''(x) \sim - x f'(x)$,
\[
\mathcal{A} f(x) \sim 
x f'(x) \left[ \gamma_U - \frac{\sigma_U^2}{2} +
\int_{\R} \left[ \log |1+z| - z I(|z| \leq 1) \right] \nu_U(\d z)
+ \int_{-1}^1 z \nu'(\d z) \right].
\]
Since $\varphi(f(x)) = x f'(x)$,  the statement follows.

So we only have to find a integrable majorant around infinity, around $0$, where the 
measure may be infinity, and around $-1$, where $\log |1+z|$ has a singularity. 

At infinity: Using the monotonicity of $g$, and 
$g(xu) \leq k_g g(x) g(u)$, $u \geq 1$, from (\ref{eq:eq1}) we obtain
\[
\frac{f(xu) - f(x) }{x f'(x)} \leq 
k_g g(u) \log u,
\]
which is integrable.

At $0$: By the mean value theorem we have
$f(x(1+z)) - f(x) = xz f'(\xi)$ with $\xi$ between $x$ and $x(1+z)$, and
$| f'(\xi) - f'(x) | = |(\xi - x) f''(\xi')| \leq |xz f''(\xi')|$, with
$\xi'$ between $x$ and $x(1+z)$. Therefore 
\[
\left| \frac{f (x(1+z)) - f(x)}{x f'(x)} - z \right|
= \left| z \left(\frac{f'(\xi)}{f'(x)} - 1 \right) \right|
\leq z^2 \frac{|x f''(\xi')|}{f'(x)},
\]
and since $x f''(x) \sim - f'(x)$, and $x f'(x)$ is slowly varying
$|x f''(\xi')| / f'(x)$ is uniformly bounded for $z \in [-1/2, 1/2]$.

At $-1$: Using the Potter bounds \cite[Theorem 1.5.6]{BGT}, for any $\varepsilon > 0$ 
there is a  
$c= c(\varepsilon) > 0$, such that
$g(xy)/ g(x) \leq 2 y^{-\varepsilon}$, $c/|x| \leq y \leq 1$. Thus for 
$c/|x| \leq |1+ z| \leq 1$ by (\ref{eq:eq1})
\[
\left| \frac{f(x(1+z)) - f(x)}{x f'(x)}\right|
= \left| \int_{|1+z|}^1 \frac{g(xy)}{g(x)} y^{-1} \d y \right|
\leq 2 \int_{|1+z|}^1 y^{-1-\varepsilon} \d y
\leq \frac{2}{\varepsilon} |1+z|^{-\varepsilon},
\]
which is integrable for some $\varepsilon$ with respect to $\nu_U$, according to the 
assumptions. Finally,
\[
\left| 
\int_{-1-c/|x|}^{-1+c/|x|}
\left[ 
\frac{f(x(1+z)) - f(x)}{g(x)} - z I(|z| \leq 1) \right] \nu_U(\d z) \right| \leq
\left( \frac{f(x)}{g(x)} + 1 \right) \nu_U(-1-c/|x|, -1+c/|x|), 
\]
and since $f(x)/g(x)$ is slowly varying the latter bound tends to 0 due to the 
integrability assumption.
\end{proof}

\begin{proof}[Proof of Proposition \ref{prop:genCDsubexp}]
Let $f(x) = (\log |x|)^\alpha$ for $|x| \geq 3$, and consider a smooth 
extension of it to $[-3,3]$, which is greater than, or equal to $1$.
We show that $f$ satisfies the conditions of Proposition \ref{prop:CDsubexp-gen}, except 
the integrability condition at $-1$.
Since $f$ is concave for $|x| \geq 3$ so it 
satisfies the assumptions of Lemma \ref{lemma:integral}. Moreover, 
\[
g(x) = x f'(x) = \alpha (\log |x|)^{\alpha -1} 
\]
is increasing and slowly varying, and $\log g(e^x)$ is concave, which, combined with 
Lemma \ref{lemma:concave} implies that $g(ux) \leq k_g g(u) g(x)$, $u,x \geq 1$, for some 
$k_g > 0$. Simple calculation shows that $x f''(x) \sim - f'(x)$. 
The function $\varphi(y) = \alpha y^{1-1/\alpha}$ is concave, and
$\varphi(f(x)) = g(x)$. Finally, as 
$g(x) \log x = \alpha (\log x)^\alpha$ the integrability condition 
at infinity is also satisfied.

Therefore, we only have to show that the integrability condition at $-1$ can be relaxed.
Let $\delta > 0$ be so small that $(1 - y)^\alpha \geq 1 - 2 \alpha y$ for 
$y \in [0,\delta]$ ($\delta(\alpha) = 1 - 2^{-1/(1-\alpha)}$ works). Then, 
with $u = |1 + z| \in [|x|^{-\delta}, 1]$
\[
\left| \frac{f(xu) - f(x)}{x f'(x)} \right|  =
\alpha^{-1} \log |x| \left[ 1 - \left( 1 - \frac{\log u^{-1}}{\log |x|} \right)^\alpha 
\right] \leq 2 \log u^{-1}.
\]
While, for $u \leq |x|^{-\delta}$
\[
\begin{split}
& \int_{|1+z| \leq |x|^{-\delta} } 
\left| \frac{f(x(1+z)) - f(x)}{x f'(x)} - z I(|z| \leq 1)
\right| \nu_U(\d z) \\
& \leq 
(\alpha^{-1} \log |x| + 1) \nu_U ((-1-|x|^{-\delta}, -1+|x|^{-\delta})) \to 0.
\end{split}
\]
At the last step we used that the integrability condition 
$\int_{-3/2}^{-1/2} | \log | 1 + z | \, | \nu_U(\d z) < \infty$ implies
$\log |x| \, \nu_U((-1-1/|x|, -1+1/|x|)) \to 0$.
\end{proof}

\begin{proof}[Proof of Proposition \ref{prop:genCDsubexp2}]
Let $f(x)  = \exp\{ \gamma (\log |x|)^\alpha\}$, $\alpha \in (0,1)$, $\gamma > 0$, 
for  $|x| \geq e$, and consider a smooth extension of it to $[-e,e]$, which is greater 
than, or equal to $1$. First, we show that the function $f$ satisfies 
the conditions of Proposition \ref{prop:CDsubexp-gen}, except the integrability condition 
at infinity, and at $-1$.

Simply,
\[
g(x) = x f'(x) = \gamma \alpha (\log |x|)^{\alpha -1} \exp \{ \gamma (\log |x|)^\alpha \},
\]
which is an increasing, slowly varying function on $(e,\infty)$. Moreover, $\log g(e^x)$ 
is concave, for $x \geq 1$, therefore Lemma \ref{lemma:concave} implies that 
$g(ux) \leq k_g g(u) g(x)$ for some 
$k_g>0$. Straightforward calculation shows that  $x f''(x) \sim - f'(x)$. Finally, 
$\varphi(f(x) ) = g(x)$ for the concave function 
$\varphi(x) = \alpha \gamma^{1/\alpha} x \, (\log x)^{1-1/\alpha}$.

Now we prove that the integrability condition at infinity and at $-1$ can be relaxed. We 
start with the condition at infinity. Let $u = |z+1| \geq 1$, and write
\[
I(u) =  \frac{f(xu) - f(x)}{x f'(x)}  =
\frac{1}{\gamma \alpha} (\log |x|)^{1-\alpha} \left[ \exp \left\{ 
\gamma (\log |x|u)^\alpha - \gamma (\log |x|)^\alpha \right\} -1 \right].
\]
For $\log u \leq \log |x|$, using $(1 + y)^\alpha \leq 1 + \alpha y$, $y \geq 0$, we have
\[
(\log |x|u)^\alpha - (\log |x|)^\alpha =
(\log |x|)^\alpha \left[ \left( 1 + \frac{\log u}{\log |x|} \right)^\alpha - 1 \right] 
\leq \alpha (\log |x|)^{\alpha -1 } \log u.
\]
If $\gamma \alpha \log u \, (\log |x|)^{\alpha -1} \leq 1$, then using $e^y - 1 \leq 2y$ 
for $y \in [0,1]$ we have
\[
I(u) \leq 2 \log u,
\]
which is integrable. While, for $\gamma \alpha \log u \, (\log |x|)^{\alpha -1} \geq 1$,
noting that $ (\log |x|)^{\alpha -1 } \log u  \leq ( \log u)^\alpha$, we obtain 
\[
I(u) \leq \frac{1}{\gamma \alpha} (\log |x|)^{1 - \alpha} 
e^{\gamma \alpha (\log u)^\alpha }
\leq \log u \, e^{\gamma \alpha (\log u)^\alpha } \leq c_1 e^{\gamma (\log u)^\alpha },
\]
with $c_1 >0$, which is again integrable according to our assumptions. Here, $c_1, c_2, 
\ldots $ are strictly positive constants, whose value are not important.

For $\log u \geq \log |x|$ we use the inequality
$(1+y)^\alpha - y^\alpha \leq 1 - y^\alpha/2$, which holds for $y \in (0,\delta)$, for 
some $\delta > 0$ ($\delta(\alpha) = 1/(2^{1/(1-\alpha)}-1)$ works). 
If $\log |x| \leq \delta \log u$, then
\[
I(u) \leq \frac{1}{\gamma \alpha} (\log |x|)^{1-\alpha} 
\exp \left\{ \gamma (\log u)^\alpha - \frac{\gamma}{2} (\log |x|)^\alpha \right\}
\leq c_2  \exp \{ \gamma (\log u)^\alpha \},
\]
with some $c_2 > 0$, which is integrable. Otherwise, for  $\log |x| \geq \delta \log u$
\[
\begin{split}
I(u) & = 
\frac{1}{\gamma \alpha} (\log |x|)^{1-\alpha} \exp \left\{ 
\gamma (\log u)^\alpha \left[ \left( 1  + \frac{\log |x|}{\log u} \right)^\alpha -
 \left( \frac{\log |x|}{\log u} \right)^\alpha \right] \right\} \\
& \leq \frac{1}{\gamma \alpha} (\log |x|)^{1-\alpha} \exp \left\{ 
\gamma (\log u)^\alpha \eta \right\} \\
& \leq c_3 \exp \{ \gamma (\log u)^\alpha \},
\end{split}
\]
with
$\eta = \sup_{y \in [\delta, 1]} [ (1 + y)^\alpha - y^\alpha ] < 1$,
and some $c_3 > 0$.
\smallskip

We turn to the integrability condition at $-1$. Now $u = |1 + z| \in [0,1]$.
The integral condition $\int_{-3/2}^{-1/2} |\log |1+z| | \, \nu_U(\d z) < \infty$ implies
$(\log |x|)^{1 - \alpha} \nu_U(( -1-1/|x|, -1+1/|x|)) \to 0$, and so for any 
$\varepsilon > 0$ it also holds that
$(\log |x|)^{1- \alpha} \nu_U(-1- |x|^{-\varepsilon}, -1 + |x|^{-\varepsilon}) \to 0$. 
Therefore, we 
may and do assume that $|x|^{-\varepsilon} \leq u  \leq 1$. Then $\log u / \log |x| \in 
[-\varepsilon, 0]$. Let $\varepsilon > 0$ be small enough, such that 
$(1 - y)^\alpha  \geq 1 - 2 \alpha y $, for $y \in [0,\varepsilon]$
($\varepsilon(\alpha) = 1 - 2^{-1/(1-\alpha)}$ works). Then
\[
\begin{split}
I(u) & =  
\frac{1}{\gamma \alpha} (\log |x|)^{1-\alpha} \left[1 - \exp \left\{ 
- \gamma (\log |x|)^\alpha \left[ 1- \left( 1 + \frac{\log u}{\log |x|} \right)^\alpha 
\right] \right\} \right] \\
& \leq \frac{1}{\gamma \alpha} (\log |x|)^{1-\alpha} \left[1 - \exp \left\{ 
 2 \gamma \alpha (\log |x|)^{\alpha-1} \log u \right\} \right]  \\
& \leq - 2 \log u,
\end{split}
\]
where at the last step we used the simple inequality $1 - e^{u} \leq - u$. Since
$\log |1+z|$ is integrable around $-1$, the statement is proved.
\end{proof}

\begin{proof}[Proof of Proposition \ref{prop:genCD3}]
Let $f(x) = |x|^\beta$ for $|x| \geq 1$, and consider an even, smooth nonnegative 
extension of it to $[-1,1]$. Since $x^\beta$ is concave
$f$ satisfies the assumptions of Lemma \ref{lemma:integral}. For $\beta < 1$ we have 
$\lim_{|x| \to \infty} f'(x) = 0$, while $f'(x)$ is bounded for $\beta =1$.

We have
\[
\int_{-1+x^{-1}}^1 [ f(x + x z) - f(x) - f'(x) x z] \nu_U (\d z)
= x^\beta \int_{-1+x^{-1}}^1 [ (1 + z)^\beta - 1 - \beta z] \nu_U (\d z),
\]
and
\[
\left| \int_{-1}^{-1+x^{-1}} [ f(x + x z) - f(x) - f'(x) x z] \nu_U (\d z) 
\right|
\leq 3 x^\beta  \nu_U ([-1, -1+x^{-1}]) = o(x^{\beta}),
\]
since $\nu_U(\{-1\}) =0$. Therefore
\[
\int_{-1}^1 [ f(x + x z) - f(x) - f'(x) x z] \nu_U (\d z)
= x^\beta \int_{-1}^1 [ (1 + z)^\beta - 1 - \beta z] \nu_U (\d z)
+ o(x^{\beta}).
\]
Using the same  argument as above (now the exceptional set is $[-1-x^{-1}, -1]$), 
we obtain
\[
\int_{|z|>1} [f(x+xz ) - f(x) ] \nu_U (\d z) 
= x^\beta \int_{|z| >1} [ |1 + z|^\beta - 1 ]  \nu_U (\d z) + o(x^\beta).
\]

Summarizing, we obtain
\begin{equation} \label{eq:int-decomp2}
\begin{split}
&\dint_{\R^2} [ f(x+x z_1 +z_2) - f(x) - f'(x) (x z_1 +  z_2) I(|z| \leq 1) ]
\nu_{UL}(\d z) \\
& = x^\beta  \left[
\int_{\R}  [ |1 + z|^\beta - 1 - z \beta I(|z| \leq 1) ] \nu_U (\d z) + 
\beta \int_{-1}^1 z \nu'(\d z) \right] + o(|x|^\beta).
\end{split}
\end{equation}
Now the statement follows. According to (\ref{eq:int-decomp2}), for $x > 0$ large enough
\[
\mathcal{A}_n f(x) \sim
\beta x^\beta \left[ \gamma_U - \frac{\sigma_U^2 (1-\beta)}{2}  
+ \int_{\R} \frac{ |1 + z|^\beta - 1 - z \beta I(|z| \leq 1) }{\beta} \nu_U (\d z) + 
\int_{-1}^1 z \nu'(\d z)  \right].
\]
Since the expression in the square bracket is negative, we obtain (\ref{eq:CD3}).
\end{proof}

Next, we handle the diffusion case. Here the result is an easy consequence of general 
results from \cite{K}.

%

\begin{proof}[Proof of Proposition \ref{prop:trans}]
It is enough to prove the second part.
We use Lemma 3.10 in \cite[p.~94]{K}. Let us choose $\alpha > 0$ small enough,
such that
\[
\gamma_U > \frac{\sigma_U^2 (1 + \alpha)}{2}.
\]
This is possible due to the assumptions. Let $f(x) = k - |x|^{-\alpha}$ outside of a
neighborhood of 0, where $k > 0$ is chosen later.
Then for $x > 0$
\[
\begin{split}
\mathcal{A} f(x) & =  (\gamma_U x + \gamma_L) \alpha x^{-\alpha -1} -
\frac{1}{2} ( x^2 \sigma_U^2 + 2 x \sigma_{UL} + \sigma_L^2) \alpha (\alpha+1)
x^{-\alpha-2} \\
& =
\alpha x^{-\alpha} \left( \gamma_U - \frac{(\alpha + 1) \sigma_U^2}{2} \right)
+ O(x^{-\alpha -1}).
\end{split}
\]
The same calculation for negative $x$ shows that $\mathcal{A} f(x) \geq 0$ for
$| x | \geq x_0 > 0$. Let $k = x_0^{-\alpha}$. Clearly,
$\sup_{|x| \geq x_0} f(x) \leq k$, and $f( \pm x_0) = 0$.
From \cite[Lemma 3.10]{K} the nonrecurrence follows relative to the domain 
$( -x_0, x_0)$. From \cite[Lemma 4.1]{K} the nonrecurrence relative to 
any domain follows.
\end{proof}

\subsection{Petite sets} \label{subsect:proof-petite}

\begin{proof}[Proof of Proposition \ref{prop:petite-2}]
Since $\lim_{n \to \infty} \p_x \{ V_n \in A \} = \pi (A)$ for any $x \in \R$, we see 
that the skeleton process $(V_n)_{n \in \N}$ is irreducible with respect to the invariant 
measure $\pi$. According to our assumptions the interior of the support is nonempty, 
therefore Theorem 3.4 in \cite{MT1} implies that the compact sets are petite sets.
\end{proof}

\subsection{Proof of the theorems}

\begin{proof}[Proof of Theorem \ref{thm:genCD2}]
The process is clearly nonexplosive, and
Propositions \ref{prop:genCD2} shows that the ergodicity condition holds. 
Thus Theorem 5.1 \cite{MT3} proves the statement.
\end{proof}

\begin{proof}[Proof of Theorem \ref{thm:gensubexp}]
Here we use the notation in \cite{DFG}.
A petite set for the skeleton chain 
is petite set for the continuous process. In particular, the compact set in condition 
(\ref{eq:CDsubexp}) is petite. Proposition \ref{prop:genCDsubexp} and 
Theorem 3.4 \cite{DFG} imply that the assumptions of Theorem 3.2 \cite{DFG} are satisfied 
with $f(x) = (\log |x|)^\alpha$ and $\varphi(x) = x^{1 - 1/\alpha}$. From the discussion 
after Theorem 3.2 \cite{DFG} we see that for the rate of convergence corresponding to the 
total variation distance we have 
\[
\| \p_x \{ V_t \in \cdot \} - \pi \| \leq  C  (\log |x|)^\alpha r_*(t)^{-1} ,
\]
with $r_*(t) = \varphi(H_\varphi^{\leftarrow}(t))$, where 
$H_\varphi(t) = \int_1^t \varphi(s)^{-1} \d s$, and $H_\varphi^{\leftarrow}$ is the 
inverse function of $H_\varphi$. After a short calculation we see that this is exactly the 
statement.
\end{proof}

\begin{proof}[Proof of Theorem \ref{thm:gensubexp2}] 
The proof is the same as the previous one. Now
$f(x) = \exp \{ \gamma (\log |x|)^\alpha \}$ and 
$\varphi(x) = x (\log x)^{1 - 1/\alpha}$. Short calculation gives that $H_\varphi(t) = 
\alpha (\log t)^{1/\alpha}$, and the statement follows.
\end{proof}

\begin{proof}[Proof of Theorem \ref{thm:genCD3}]
Proposition \ref{prop:genCD3} shows that the 
exponential ergodicity condition holds, thus Theorem 6.1 \cite{MT3} implies the statement.
\end{proof}

\bibliographystyle{plain}
\bibliography{GOUergod}

\end{document}